\title[Ideal Triangulations and Once-Punctured Surface Bundles]{Ideal Triangulations and Once-Punctured Surface Bundles}
\author[B. Bryant]{Birch Bryant}
\address{University of North Alabama}
\email{bbryant3@una.edu}
\date{}
\begin{document}

	\begin{abstract}
		A well-known result of Walsh states that if $\mathcal T^*$ is an ideal triangulation of an atoroidal, acylindrical, irreducible, compact 3-manifold with torus boundary components and $\mathcal T^*$ has essential edges, then every properly embedded, two-sided, incompressible surface $S$ is isotopic to a spun-normal surface in $\mathcal T^*$ unless $S$ is isotopic to a fiber or virtual fiber. For a given manifold $M$ that fibers over $S^1$, it was previously unknown whether there exists an ideal triangulation in which the fiber appears as a spun-normal surface. We prove that such a triangulation exists and give an algorithm to construct the ideal triangulation provided $M$ has a single boundary component.
		
	\end{abstract}
	\maketitle
	
	\section{Introduction}
	Normal and spun-normal surface theory lies at the heart of algorithmic 3-manifold topology. Since Haken’s foundational work on unknot recognition \cite{Haken}, normal surfaces have played a central role in key algorithmic advances, including Thompson’s \cite{thompsonthin} and Rubinstein’s \cite{RubinsteinAlmostNormal} solutions to the 3-sphere recognition problem, and the detection of reducible surgeries on knots by Jaco, Rubinstein, and Sedgwick \cite{Findingplanarsurfaces}. Spun-normal surfaces have proven highly effective in computational topology. They connect deeply with topological quantum invariants, such as the 3D Index, as computed by Garoufalidis, Hodgson, Hoffman, and Rubinstein \cite{3dindex}, and the colored Jones polynomial, studied by Futer, Kalfagianni, and Purcell \cite{Gutsbook}. Especially relevant to this work is the algorithm of Cooper, Tillmann, and Worden \cite{CoopTW}, which utilizes embedded and immersed spun-normal surfaces to compute the unit Thurston norm ball for cusped hyperbolic 3-manifolds with torus boundary.
	
	Following the groundbreaking results of Bergeron and Wise \cite{BergeronWise} in 2012 and Agol \cite{agolvirtual} in 2013, it is known that every hyperbolic 3-manifold (in our case with torus cusps) is virtually fibered—that is, it admits a finite cover which fibers over the circle (we include fibers in this notion of virtual fibers). The virtual fibers are carried by faces of the Thurston norm ball \cite{TNB}. Yet virtual fibers are at issue with spun-normal surface theory. A well-known theorem by Walsh \cite{walsh} illustrates this obstacle: in a triangulation with essential edges, every properly embedded, two-sided, incompressible, $\partial$-incompressible surface that is not a virtual fiber is isotopic to a spun-normal surface. This highlights a gap: virtual fibers typically fall outside the reach of spun-normal surface techniques. Cooper, Tillmann, and Worden address this issue directly in their work (Question 5.1 of \cite{CoopTW}), asking whether every fibered, once-cusped, hyperbolic 3-manifold admits an ideal triangulation in which the fiber is isotopic to an embedded spun-normal surface. The aim of this paper is to give an affirmative answer to that question and to present an algorithm for constructing such a triangulation.
	
	In Section \ref{Sec:2} we give the definitions of triangulations, ideal triangulations, normal, and spun-normal surfaces used throughout this work. In Subsection \ref{sec:lwtaut} we recall key results from Tollefson and Wang \cite{TollefsonWang} in the particular case of fibered 3-manifolds. This culminates in Lemma \ref{vertexfiber}, which finds a fiber amongst a finite set of normal surfaces in a triangulation. 
	\vspace{1ex}
	\begin{lemn}[\ref{vertexfiber}]
		Let $M_K$ be the exterior of a knot in a rational homology 3-sphere which fibers over the circle with fiber $F$. For any triangulation $\mathcal T$ of $M_K$ there exists an lw-taut vertex solution $F'$ isotopic to $F$.\vspace{1ex}
	\end{lemn}
	
	Section \ref{Sec:3} discusses two complementary operations on triangulations: crushing along normal surfaces and inflating ideal triangulations. Crushing triangulations along normal surfaces is a technique developed by Jaco and Rubinstein in \cite{Jaco2003} to reduce the complexity of 3-manifolds. A review of this process is given in Subsection \ref{sec:crush}. Subsection \ref{sec:inflate} covers the inverse process--inflating ideal triangulations--developed by  Jaco and Rubinstein in \cite{Jaco14}. Particular attention is paid here to the boundary of the inflation triangulations and how the push-off of the boundary intersects individual tetrahedra. In Subsection \ref{sec:short}, we introduce the notion of \textit{short inflations}, those having a minimal number of inflated tetrahedra. This minimality plays a central role in our approach and, when combined with the concept of \textit{compatible} normal surfaces, forms the foundation for the two main technical results of this work:
	
	\vspace{1ex}
	\begin{thmn}[\ref{shortquads}]
		Let $M$ be an orientable, compact, irreducible, $\partial$-irreducible 3-manifold with non-empty connected boundary homeomorphic to a torus. Let $S$ be a connected, properly embedded, incompressible, $\partial$-incompressible surface in $M$ with $\partial S$ connected. There exists a short inflation triangulation $\mathcal T$ of $M$ in which $S$ is isotopic to a compatible normal surface. Further, there is an algorithm to construct such a $\mathcal T$.
	\end{thmn}
	
	\vspace{1ex}
	
	\begin{thmn}[\ref{crushsurface}]
		Let $M$ be an orientable, compact, irreducible, $\partial$-irreducible 3-manifold with non-empty connected boundary homeomorphic to a torus. Let $\mathcal T$ be a short inflation with $S$ a compatible normal surface. The ideal triangulation $\mathcal T^*$ obtained by crushing $\mathcal T$ along $\partial M$ has a spun-normal surface $S^*$ with normal quadrilaterals completely determined by the normal quadrilaterals of $S$ in $\mathcal T$, the lifts of which (relative to the crushing map) are contained in $\widetilde{\mathbf{\Delta}}^*\subset\widetilde{\mathbf{\Delta}}$. Further, $S^*$ represents the interior of a properly embedded surface in $\mathring M$ isotopic to $\mathring S$.
	\end{thmn} 
	
	\vspace{1ex}
	
	Section \ref{Sec:Algo} contains the main theorems of this work, algorithms built over the course of the preceding sections that construct ideal triangulations in which a fiber is isotopic to a spun-normal surface and identifies such a spun-normal surface amongst a finite set of surfaces. The  first result applies to knots in rational homology 3-spheres:
	
	\vspace{1ex}
	
	\begin{thmn}[\ref{main1}]
		Let $M_K$ be the exterior of a knot in a rational homology 3-sphere, and suppose $M_K$ fibers over the circle with fiber $F$. There is an algorithm to construct an ideal triangulation $\mathcal T^*$ of $M_k$ with $S$ a spun-normal surface such that $\mathring S$ is isotopic to $\mathring F$ in $\mathring M_K$. Further, the algorithm will identify $S$.
	\end{thmn}
	
	\vspace{1ex}
	\noindent The second result gives an algorithm for constructing an ideal triangulation of a once-cusped hyperbolic 3-manifold $M$ and identifies the fiber as isotopic to a spun-normal surface of the form $\sum n_i F_i$ where $F_i$ are vertex solutions and $\sum n_i \leq \beta_1$ where $\beta_1$ is the first Betti number of $M$. This construction not only realizes the fiber as a spun-normal surface but also gives a quantitative bound on its complexity. In doing so, it offers a complete and constructive answer to the question posed by Cooper, Tillmann, and Worden.
	
	\vspace{1ex}
	
	\begin{thmn}[\ref{main2}]
		Let $M$ be a compact, connected, irreducible, $\partial$-irreducible, atoroidal, orientable 3-manifold with non-empty connected boundary homeomorphic to a torus. Suppose $M$ fibers over the circle with fiber $F$. There is an algorithm to construct an ideal triangulation $\mathcal T^*$ of $M$ with $S$ a spun-normal surface such that $\mathring S$ is isotopic to $\mathring F$. Further the algorithm will identify $S$.
	\end{thmn}

	\vspace{1ex}
	Section \ref{Sec:Examples} gives two examples of the implementation of Theorem \ref{main1}: first for the complement of the trefoil knot, and second, for the complement of the (-2,3,7)-pretzel knot. In both cases, the ideal triangulations constructed are, to the best of the author's knowledge, the first in the literature with the property that the fiber ($S_{1,1}$ and $S_{5,1}$ resp.) is isotopic to an embedded spun-normal surface. The section concludes with a set of open questions aimed at extending the study of such triangulations.
	
	\subsection*{Acknowledgments} This work began as the author's PhD dissertation, supervised by Neil Hoffmann. The author would like to thank him for his patience and continued support. The author would like to thank William Jaco for initiating this investigation. The author would like to express their gratitude to Saul Schleimer for key references and encouraging the focus on vertex solutions over fundamental solutions. The author gratefully acknowledges that part of the work and code used to generate examples in this paper were developed with support from Henry Segerman’s NSF grant (DMS-1708239, Three- and Four-Dimensional Triangulations and Mathematical Visualization, 2017–2021) during a summer research assistantship at Oklahoma State University.  Finally, the author would like to thank Ian Agol and Joel Hass for stimulating conversations leading to some of the specific examples in Section \ref{Sec:Examples}.
	\section{Triangulations and Normal Surface Theory}\label{Sec:2}
	
	We adopt the notation used in \cite{Jaco2003} for triangulations and ideal triangulations. Let $\widetilde{\mathbf{\Delta}} = \{\widetilde\Delta_i\}_i$ be a collection of pairwise disjoint tetrahedra, $\mathbf{\Phi}$ a collection of orientation-reversing face identifications, or \textit{glueings}. The identification space $X=\widetilde{\mathbf{\Delta}}\slash \mathbf{\Phi}$ is a 3-manifold at each point except possibly at a collection of vertices, $\{v_i\}$. If $X=M$ is a manifold, we say $\mathcal T=(\widetilde{\mathbf{\Delta}},\mathbf{\Phi})$ is a \textit{triangulation} of $M$. If $X$ is not a manifold, $X\backslash\{v_i\}$ is homeomorphic to the interior of a compact 3-manifold with boundary, $M$, and we say $\mathcal T^*=(\widetilde{\mathbf{\Delta}},\mathbf{\Phi})$ is an \textit{ideal triangulation} of $M$ and we refer to the tetrahedra of $\mathcal T^*$ as \textit{ideal tetrahedra}. We denote the image of the $k$-subsimplices of $\widetilde{\mathbf{\Delta}}$ under the quotient map by $\mathcal{T}^{(k)}$ (or ${\mathcal{T}^*}^{(k)}$).

Throughout this paper, $M$ will be an orientable, compact, irreducible, $\partial$-irreducible 3-manifold with non-empty connected boundary homeomorphic to a torus. By work of Jaco and Rubinstein \cite{Jaco2003}, we can assume that an ideal triangulation, $\mathcal T^*$ of $M$, has a single vertex, $v^*$, referred to as the \textit{ideal vertex}. The frontier of a regular  neighborhood $N(v^*)$ we call the \textit{ideal boundary} of $\mathcal T^*$. For a triangulation $\mathcal T$ of $M$ it is necessary for some tetrahedra to have unglued faces. These unglued faces triangulate $\partial M$, we refer to this decomposition of $\partial M$ as the \textit{material boundary} of $\mathcal T$.

In our construction, the tetrahedra of $\widetilde{\mathbf{\Delta}}$ are not necessarily embedded in $X$, but their interiors are always embedded. For each tetrahedron $\Delta$ in $X$, there is exactly one tetrahedron $\widetilde{\Delta}\in\widetilde{\mathbf{\Delta}}$, that projects to $\Delta$, we refer to $\widetilde{\Delta}$ as the \textit{lift} of $\Delta$. A face, $\sigma$ in $X$ lifts to two faces in $\widetilde{\mathbf{\Delta}}$ (not necessarily on distinct tetrahedra) if and only if the interior of $\sigma$ is in the interior of $X$. Otherwise $\sigma$ lifts to exactly one face in $\widetilde{\mathbf{\Delta}}$ and $\sigma$ is in the material boundary of $\mathcal T$.

For the 3-manifolds of interest in this work, ideal-triangulations can be constructed using numerous methods. Bing \cite{Bing} gives a method to construct a triangulation of $M$ having material boundary. This material boundary can be collapsed to an ideal vertex using the methods of Jaco and Rubinstein \cite{Jaco2003}. In \cite{Lackenby}, Lackenby gives an algorithm to construct a taut ideal triangulation, the faces of which carry all properly embedded, $\chi_-$-minimizing incompressible surfaces.
	
	\subsection{Normal and Spun-Normal Surface Theory}\label{Norm}
	
	We give here a brief overview of the details from normal surface theory that will be used in this work.

Fix a tetrahedron $\widetilde \Delta \in \widetilde{\mathbf{\Delta}}$. A \textit{normal arc} in a 2-simplex $\sigma$ of $\widetilde \Delta$ is a properly embedded arc with each boundary component lying in a distinct edge of $\sigma$. A \textit{normal disc} in $\widetilde \Delta$ is a properly embedded triangle or quadrilateral with boundary edges in distinct faces of $\widetilde \Delta$. There are four distinct isotopy classes of normal triangles and three distinct isotopy classes of normal quadrilaterals, these are each referred to as \textit{normal disc types}.

Let $\mathcal T$ be a triangulation of $M$. Let $S$ be a properly embedded surface in $M$ which is transverse to $\mathcal{T}^{(2)}$, we say that $S$ is \textit{normal} (\textit{normal in} $\mathcal T$) if for each tetrahedron $\Delta$ of $\mathcal T$, the intersection of $S$ with $\Delta$ lifts to a finite collection of normal triangles and normal quadrilaterals in $\widetilde{\Delta}$. An isotopy of $M$ is called a \textit{normal isotopy} (with respect to $\mathcal T$) if it is invariant on each simplex of $\mathcal T$. A triangulation $\mathcal T$ is said to have \textit{normal boundary} if the frontier of a regular neighborhood of $\partial M$ is normally isotopic to a normal surface in $\mathcal T$. We call said normal surface a \textit{boundary-linking surface}. It is not necessary in general for a triangulation with material boundary to have normal boundary, but it will be so for the triangulations constructed later in this work.

For a given triangulation, $\mathcal T$, with $t = |\mathcal T^{(3)}|$ tetrahedra there are $7t$ normal isotopy classes of normal discs. Given an ordering of these normal disc types $d_1,d_2, \dots d_{7t}$ we can represent a given normal surface $S$ by the vector $(x_1,x_2,\dots,x_{7t})\in \mathbb R^{7t}_{\geq 0}$ where $x_i$ is the number of normal discs in the induced cell-decomposition of $S$ of type $d_i$. Suppose $S$ meets a face $\sigma$ of $\mathcal T$ in a normal arc $\gamma$. For each of $\widetilde{\Delta}_i,\widetilde{\Delta}_j$ (possible $i=j$) identified at $\sigma$, $\gamma$ determines one normal triangle type and one normal quadrilateral type. Let $d_i,d_j,d_k,d_l$ be those types, then the face identification of $\mathcal T$ determines a \textit{matching equations}: $x_i+x_j=x_k+x_l$. For each face $\sigma$ in $\mathcal T$ that lifts to two faces of some $\widetilde{\Delta}_i,\widetilde{\Delta}_j\in \widetilde{\mathbf{\Delta}}$, there are three such equations determined by the three normal arc classes in $\sigma$.  Non-negative integer solutions to this system give a parametrization of the normal isotopy classes of normal surfaces in $\mathcal T$. We add the restriction that the solutions to the matching equations have non-negative coordinates and we get a cone in the positive-orthant of $\mathbb R^{7t}_{\geq 0}$. We refer to this cone as the \textit{solution space}, $\mathcal{S}_{\mathcal T}$. We refer to the integer lattice points of $\mathcal{S}_\mathcal{T}$ as \textit{solutions}. For a normal surface $S$, we refer to both the embedding and the normal isotopy class as simply $S$. If in addition to the gluing equations we require $\sum x_i=1$, we determine a compact, convex linear cell. The rational points in this cell correspond to the projective classes of points in  $\mathcal S_{\mathcal T}$. We refer to this compact, convex linear cell as the \textit{projective solution space} denoted $\mathcal{P}_{\mathcal T}$. A rational point in $\mathcal P_\mathcal{T}$ is an \textit{admissible} solution if corresponding to each tetrahedra there is at most one of the normal quadrilateral types whose corresponding coordinate is non-zero. We denote the projective class of a normal surface $S$ by $\vec{S}\in \mathcal{P}_\mathcal{T}$. Every admissible solution in $\mathcal{P}_\mathcal{T}$ is the projective class of some normal surface in $\mathcal T$.

In the other direction, it is natural to wonder what conditions are required for a surface to be isotopic to some normal surface in $\mathcal T$. Of use for this work we have the following result due to Haken:

\begin{theorem}[\cite{Haken}]\label{haken}
	Let $M$ be a compact, irreducible 3-manifold with triangulation $\mathcal T$. If $S$ is an incompressible, $\partial$-incompressible, properly embedded surface in $M$, then $S$ is isotopic to a normal surface in $\mathcal T$.
\end{theorem}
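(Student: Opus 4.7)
The plan is the classical Haken normalization by iterated complexity reduction. First I would put $S$ in general position with respect to $\mathcal T$ (transverse to every face and edge, disjoint from $\mathcal T^{(0)}$), and among all such surfaces properly isotopic to $S$ choose one minimizing the weight $w(S) := |S \cap \mathcal T^{(1)}|$. In this representative, $S \cap F$ is a disjoint union of properly embedded arcs and circles in each face $F$.

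The reductions then proceed in stages. Closed components of $S \cap \mathcal T^{(2)}$ are eliminated by an innermost-disk argument: an innermost circle $C \subset F$ cobounds a disk $D_F \subset F$ meeting $S$ only along $C$; incompressibility of $S$ yields a disk $D_S \subset S$ with $\partial D_S = C$, and irreducibility of $M$ makes $D_F \cup D_S$ bound a $3$-ball through which $S$ can be isotoped, strictly decreasing the number of circle components of $S \cap \mathcal T^{(2)}$ without raising $w(S)$. Iteration clears all circles. Next I would eliminate \emph{trivial arcs}, i.e.\ arcs of $S \cap F$ whose endpoints both lie on a single edge $e$: an outermost such $\alpha$ cobounds a disk $E \subset F$ with a subarc of $e$. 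Depending on whether $\alpha$ is essential on $S$, one either applies a ball-push argument (inessential case) or invokes $\partial$-incompressibility to push $E$ off across $\alpha$, in either case obtaining an isotopy strictly reducing $w(S)$. Minimality then forces every component of $S \cap F$ to be a normal arc.

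It remains to show each component $D$ of $S \cap \Delta$, for a tetrahedron $\Delta$, is a normal triangle or quadrilateral, and this final combinatorial step I expect to be the main obstacle. Since $\Delta$ is a ball and $S$ is incompressible, $D$ is a disk whose boundary is a simple closed curve on $\partial \Delta \cong S^2$ built from normal arcs. If $\partial D$ met some edge $e$ in at least two points, I would take an innermost bigon on $\partial \Delta$ cobounded by a subarc of $e$ and a subarc of $\partial D$; innermost-ness rules out any edges or vertices of $\mathcal T$ in its interior, so the bigon lies in a single face, making the bounding subarc of $\partial D$ a normal arc with both endpoints on $e$ --- a trivial arc already eliminated in the previous stage. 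Hence each edge of $\Delta$ is met at most once by $\partial D$, and since any face of $\Delta$ has only three edges while each arc of $\partial D$ in that face uses two distinct edge-endpoints, each face is visited at most once, limiting the length of $\partial D$ to four. Enumeration of simple normal cycles on $\partial \Delta$ of length three or four yields exactly the four triangle and three quadrilateral normal disc types, so $S$ is normal and isotopic to the original surface.
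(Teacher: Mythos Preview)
The paper does not prove this statement at all: it is quoted as Haken's theorem with a bare citation and then used as a black box throughout. So there is no ``paper's own proof'' to compare against; your outline is essentially the classical Haken normalization and is the right general strategy.

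That said, your final combinatorial step contains a genuine gap. You claim that if $\partial D$ meets an edge $e$ at two points, then an innermost bigon on $\partial\Delta$ cobounded by a subarc of $e$ and a subarc of $\partial D$ must lie in a single face, hence exhibits a trivial arc. This is false: the almost-normal octagon meets two edges of $\Delta$ twice each, all eight of its boundary arcs are honest normal arcs, and every bigon on $\partial\Delta$ bounded by a subarc of one of those edges together with a subarc of the octagon contains a vertex of $\Delta$ in its interior and hence spreads over several faces. ``Innermost'' among such bigons only excludes further arcs of $\partial D$, not vertices or edges of the tetrahedron. The exclusion of long disks actually requires a separate weight-reduction move: given $p,q\in\partial D\cap e$, choose an arc $\alpha\subset D$ from $p$ to $q$, let $\gamma\subset e$ be the subarc joining them, and use that $\alpha\cup\gamma$ bounds a disk $E$ in the ball $\Delta$ across which $S$ may be isotoped to drop $|S\cap e|$ by two (re-normalizing any trivial arcs this creates), contradicting minimality of $w(S)$.

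Two smaller points: the assertion ``since $\Delta$ is a ball and $S$ is incompressible, $D$ is a disk'' is not automatic and needs its own innermost-circle argument (a non-disk component of $S\cap\Delta$ yields a compressing disk for $S$ in $\Delta$, which after cleaning up intersections with $S$ via irreducibility contradicts incompressibility); and the invocation of $\partial$-incompressibility in the trivial-arc stage is misplaced, since for interior edges the bigon push is simply an ambient isotopy reducing weight, with $\partial$-incompressibility only entering when the edge lies in $\partial M$.
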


The \textit{carrier} $C_S$ of a normal surface $S$ is the unique minimal face of $\mathcal{P}_\mathcal{T}$ that contains $\vec{S}$.  A normal surface $F$ is said to be \textit{carried} by $C_S$ if $\vec{F}\in C_S$. We have that $F$ is carried by $C_S$ if and only if there exists a normal surface $G$ such that $nS=F+G$ for some positive integer $n$. Every rational point of $C_S$ is an admissible solution. In particular, a vertex $\vec{v}$ of $C_S$ (more generally a vertex of $\mathcal{P}_\mathcal{T}$) is a rational point and must be admissible as each coordinate corresponding to a normal disc type not in $S$ is zero. We say a normal surface is a \textit{vertex solution} if its parametrization is $k\vec{v}$ where $k$ is the smallest integer for which $k\vec{v}$ has non-negative integer coordinates.

Let $\mathcal T^*$ be an ideal triangulation of $M$. We here adapt the notation from \cite{Kang2005}. Let $S$ be a closed compact properly embedded surface in $M$, transverse to $\mathcal{T}^{(2)}$. As before, we say $S$ is a normal surface if for each ideal tetrahedron $\Delta$ of $\mathcal T^*$, the intersection of $S$ with $\Delta$ lifts to a finite collection of normal triangles and normal quadrilaterals. The ideal boundary of $\mathcal T^*$ is normally isotopic to a normal surface, called the \textit{vertex-linking surface}, consisting of a single normal triangle of each type. 

If $S$ is a compact surface with boundary, properly embedded in $M$, then the interior, $\mathring S$, is properly embedded in $\mathring M$. Suppose for each ideal tetrahedron of $\mathcal T^*$, the intersection of $\mathring S$ lifts to a collection of normal discs. The intersection of the closure of a small regular neighborhood $N$ of the vertex $v$ of $\mathcal T^*$ with the interior of $S$ is a half-open annulus, $A = \mathring S \cap \bar N$, consisting of an infinite number of normal triangles and possibly some pieces of normal triangles at $\partial A$. We say $A$ is an \textit{infinite trivial normal annulus}. This annulus ``spins" around the ideal boundary of $\mathcal T^*$ and we say $S$ is a \textit{spun-normal surface}. Thus a spun-normal surface is characterized by having a finite number of normal quadrilaterals and an infinite number of normal triangles of each normal isotopy class. 

Truncating the ideal vertex of $\mathcal T^*$ produces a cell-decomposition of $M$ consisting of truncated tetrahedra with hexagonal faces identified. The unglued triangular faces form a triangulation of $\partial M$. The intersection of $\mathring S$ with a truncated tetrahedron is a collection of quadrilaterals, triangles, truncated triangular and truncated quadrilateral discs as depicted in Figure \ref{fig:disctypes}. These discs form a cell-decomposition on the properly embedded surface $S$ that we call a \textit{core of the spun-normal surface} \cite{KRSpun}. The number of such discs is dependent on the truncation, but all such decompositions yield a surface $\bar S$ isotopic to $S$ in $M$. As such they are all cores for the spun-normal surface. The \textit{boundary-slope} of the spun-normal surface $S$ is the homotopy class of a non-nullhomotopic component of the intersection of $\bar S$ with the unglued triangular faces of the truncated tetrahedra cell-decomposition of $M$. As $\partial M$ is a torus, there can only be one such homotopy class for a properly embedded surface.

\begin{figure}
	\includegraphics[width=4.5in]{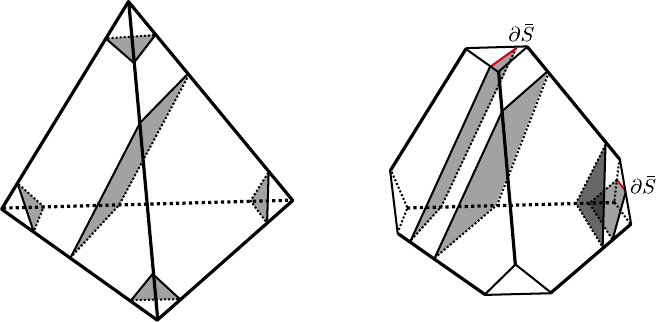}\caption{Left: Various normal disc types in a tetrahedron. Right: discs in a truncated tetrahedron. The surface boundary shown in red.}\label{fig:disctypes}
\end{figure}

In \cite{TollefsonQ}, Tollefson showed that for a triangulation $\mathcal T$ the admissible solutions are completely determined by their normal quadrilaterals. Kang \cite{Kang2005} extended this result to show for ideal triangulations, both normal and spun-normal surfaces are completely determined by their normal quadrilaterals. In both instances, the notation will be as follows: given a (ideal) triangulation with $t$ tetrahedra, there are $3t$ normal isotopy classes of normal quadrilaterals. Fix an ordering of these normal quadrilateral types $d_1^Q, d_2^Q, \dots, d_{3t}^Q$. There is a unique representation of a (spun-)normal surface by the vector $S_Q=(q_1,q_2,\dots, q_{3t})\in \mathbb R^{3t}_{\geq 0}$. Consider an edge $e_k=\langle ab\rangle$  with positive orientation from $a$ to $b$ in ($\mathcal T^*$) $\mathcal T$. Let $d^Q$ be a normal quadrilateral type in $\widetilde{\Delta}$ and let $v_j$ be a corner of a normal quadrilateral of type $d^Q$ incident to $\hat{e}_k=\langle \hat a \hat b \rangle$ an edge of $\widetilde{\Delta}$ in the lift of $e_k$. Each disc of type $d^Q$ has within its boundary, a normal arc separating vertex $\hat a$ in a face of $\widetilde\Delta$ containing $\hat e_k$ and a normal arc separating vertex $\hat b$ in the other face containing $\hat e_k$. If for the disc type $d^Q$ a right handed twist around $\hat e_k$ moves from the face of $\widetilde{\Delta}$ containing the normal arc of $d^Q$ separating $\hat b$ to the face containing the normal arc separating $\hat a$, we define the \emph{sign} of the corner $v_j$ with respect to the edge $e_k$ to be $\delta_{k,j}=+1$. If a right hand twist around $\hat e_k$ moves from the arc $\hat a$ to $\hat b$, then we set $\delta_{k,j}=-1$. For every corner $v_j$ that is incident to no lift $\hat e_k$ the sign is defined to be $\delta_{k,j}=0$. For a normal quadrilateral of type $d^Q$ we define the \textit{sense} $\epsilon_k$ relative to the edge $e_k$ to be the total of the signs of each of the four corners of a quadrilateral of the given type. Consider for a (spun-)normal surface $S$ all normal quadrilateral types incident to some edge $e_k$ in (ideal) triangulation $\mathcal T$. The number of normal quadrilaterals in the cell decomposition of $S$ of positive sense must be equal to the number of those of negative sense. The parametrization $S_Q=(q_1,q_2,\dots, q_{3t})\in \mathbb R^{3t}_{\geq 0}$ is a solution to the linear system of equations, the \emph{Q-matching equations} with one equation for each edge $e_k$ in $\mathcal T$:\begin{equation}\label{eq:qmatch}
	\bigg\{ \sum^{3t}_{i=1}\epsilon_{k,i}q_i=0\bigg\}_k
\end{equation}
	
	\subsection{Fibers and \textit{lw}-Taut Surfaces}\label{sec:lwtaut}
	
	Let $F$ be a compact, orientable, once-punctured surface with $h\in \text{Aut}(F)$. Let $M_h$ be the mapping torus $M_h = F \times [0,1]\slash \sim$, where $(x,0)\sim(h(x),1)$. We say a 3-manifold $M$ \textit{fibers over the circle} $S^1$ with \emph{fiber} $F$ if $M$ is homeomorphic to $M_h$ for some $h\in \text{Aut}(F)$. This gives a proper embedding of $F$ in $M$ such that the image is incompressible and $\partial$-incompressible. The identification of $M$ with the mapping torus has strong implications on the homology groups $H_2(M)$ and $H_2(M,\partial M)$. In particular, any incompressible surface properly embedded in $M$ that is homologous to $F$ is homotopic into $F$ \cite{neuwirth}. In the case of knots in rational homology 3-spheres this relation is stronger.

Let $M$ be a rational homology 3-sphere, $K$ a null-homologous knot in $\Sigma$. Let $M_K=M \backslash \overline{N(K)}$ be the knot exterior in $M$. Suppose further that $M_K$ fibers of the circle with fiber $F$. Then $F$ is a connected surface with a single boundary component and the fiber bundle $F\times [0,1]\slash \sim$ induces a fiber bundle on the torus boundary of $M$. The relative homology group $H_2(M,K)\cong H_1(K)$ is generated by the homology classes $[K]$. As $[\partial F]=[K]$ we can identify the relative homology class of the fiber $[F] = [K]$ through the isomorphism $H_2(M_K,\partial M_K)\cong H_2(M,N(K))\cong H_2(M,K)$. It can then be completely determine if a surface, $S$, is isotopic to a fiber by examining $\partial S$.

\begin{lemma}[\cite{Edmonds,Kitayama2022}]\label{fibiso}
	Let $M$ be a 3-manifold that fibers over the circle with fiber $F$, a once-punctured orientable surface. If $S \subset M$ is a connected, properly embedded, two-sided, incompressible, $\partial$-incompressible surface with $[S]=[F]\in H_2(M,\partial M;\mathbb Z)$, then $S$ is isotopic to $F$.
\end{lemma}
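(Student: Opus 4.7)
The plan is to prove the lemma in two stages: first, promote the homological hypothesis $[S]=[F]$ to a proper homotopy between $S$ and $F$; second, promote that proper homotopy to an ambient isotopy. Both stages follow the scheme already cited in the excerpt (Neuwirth for the homological part, Edmonds--Kitayama for the isotopy part), so the task is really to verify the hypotheses.

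For the first stage, I would pass to the infinite cyclic cover $p\colon \widetilde M \to M$ determined by the fibration $M \to S^{1}$. The product structure $\widetilde M \cong F \times \mathbb{R}$ is immediate from the fiber bundle hypothesis. Because $[S]=[F]\in H_{2}(M,\partial M;\mathbb Z)$ and this class evaluates as $1$ on the cohomology class pulled back from the generator of $H^{1}(S^{1};\mathbb Z)$, the preimage $p^{-1}(S)$ has a component $\widetilde S$ that maps homeomorphically onto $S$ and separates $\widetilde M$ into two pieces, each a proper $h$-cobordism of surfaces. Two-sidedness and incompressibility lift, so $\widetilde S$ is a properly embedded, incompressible, two-sided surface in $F\times\mathbb{R}$ that is homologous to $F\times\{0\}$. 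The Neuwirth-type argument invoked in the excerpt then yields a proper homotopy from $\widetilde S$ to $F\times\{0\}$, which descends to a proper homotopy $H\colon S\times[0,1]\to M$ from $S$ to $F$.

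For the second stage, both $S$ and $F$ are properly embedded, two-sided, incompressible, $\partial$-incompressible surfaces in the irreducible, $\partial$-irreducible 3-manifold $M$ (irreducibility and $\partial$-irreducibility follow from $M$ being a surface bundle with incompressible fiber having non-trivial genus). Edmonds' theorem, in the form extended by Kitayama to the bounded case, asserts that in exactly this setting a proper homotopy between such surfaces can be replaced by a proper ambient isotopy. Applying this upgrade to the proper homotopy $H$ produced above yields the desired isotopy from $S$ to $F$.

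The hardest step will be the boundary bookkeeping that makes each upgrade legal. In stage one, I need $\partial S$ and $\partial F$ to share a slope on $\partial M$ so that $\widetilde S$ is genuinely proper in $F\times\mathbb{R}$ and does not interact pathologically with the boundary annulus; this follows by applying the connecting homomorphism $\partial\colon H_{2}(M,\partial M;\mathbb Z)\to H_{1}(\partial M;\mathbb Z)$ to the equality $[S]=[F]$ and using that a null-homologous curve on the torus component is null-homotopic, combined with the $\partial$-incompressibility of $S$ to rule out inessential boundary components. In stage two, the point is to invoke the bounded version of Edmonds' theorem (Kitayama) rather than the closed case, and to check that $\partial$-incompressibility of both surfaces is exactly the hypothesis required there. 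Once these two boundary checks are in place, the two citations combine cleanly to give the statement.
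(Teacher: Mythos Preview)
The paper does not supply a proof of this lemma; it is stated as a direct citation to Edmonds and Kitayama and then used as a black box in the proof of Lemma~\ref{vertexfiber}. Your two-stage outline---lift to the infinite cyclic cover $F\times\mathbb{R}$ to produce a proper homotopy, then upgrade to an ambient isotopy via a homotopy-implies-isotopy theorem for incompressible surfaces in an irreducible, $\partial$-irreducible $3$-manifold---is the standard route underlying those references and is sound.

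One point of your writeup deserves tightening. The sentence ``this class evaluates as $1$ on the cohomology class pulled back from the generator of $H^{1}(S^{1};\mathbb Z)$'' does not parse as written, since $[S]\in H_{2}(M,\partial M)$ and $\varphi\in H^{1}(M)$ do not pair directly, and in any case the number you need for the lifting is $0$, not $1$. The clean argument is that $[S]=[F]$ is Poincar\'e--Lefschetz dual to $\varphi$, so for any loop $\gamma\subset S$ one has $\varphi(\gamma)=\gamma\cdot F=\gamma\cdot S=0$, the last equality because $S$ is two-sided and $\gamma$ can be pushed off $S$; hence $\pi_{1}(S)\to\pi_{1}(M)$ lands in $\ker\varphi$ and $S$ lifts homeomorphically. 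With that fix, together with your boundary check that $\partial S$ is a single curve of the same slope as $\partial F$ (which does follow from applying $\partial$ to $[S]=[F]$ and using $\partial$-incompressibility to rule out trivial components), the remainder of the argument goes through.
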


By Theorem \ref{haken}, a fiber of a bundle structure $F$ is isotopic to a normal surface in a given triangulation $\mathcal T$. We will denote this normal surface simply as $F$. It is standard in applications of normal surface theory to identify a surface being studied as a vertex solution or give a way to construct said surface as the sum of vertex solutions. To this end we discuss the work by Tollefson and Wang \cite{TollefsonWang} on taut surfaces. 

For a properly embedded surface $S$ in $M$, define  $$\chi_-(S)=-\chi(S\backslash\{\text{spherical and disc components of }S\}).$$An oriented, incompressible, $\partial$-incompressible surface, $S$, properly embedded in $M$ is said to be \textit{taut} if the homology class $[S]\in H_2(M,\partial M)$ is non-trivial, there is no homologically trivial union of components of $S$, and $S$ is $\chi_-$-minimizing over all properly embedded surfaces representing $[S]$. If for a triangulation $\mathcal T$, $S$ is transverse to the 2-skeleton of $\mathcal T$, we define the \textit{weight} of $S$ by $wt(S)=|S\cap \mathcal T^{(0)}|$. We say $S$ is \textit{lw-taut} if it has minimal weight among all taut surfaces representing the homology class $[S]$. Define $kS$ to be the disjoint union of $k$ copies of $S$. If $S$ is lw-taut it follows that $kS$ is an lw-taut surface with homology class $k[S]$. The following from \cite{TollefsonWang} gives existence of normal surfaces that are lw-taut:

\begin{lemma}[Lemma 2.3 in \cite{TollefsonWang}]\label{tautnorm}
	Let $S$ be a taut surface in a triangulated 3-manifold $M$ such that $S$ is transverse to $\mathcal T$. Then there exists a taut normal surface $G$ homologous to $S$ such that $wt(G)\leq wt(S)$.
\end{lemma}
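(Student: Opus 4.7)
The plan is to follow the classical Haken normalization procedure while monitoring weight. Starting from $S$ transverse to $\mathcal T$, I would first perform a small isotopy to remove any intersections with the $0$-skeleton $\mathcal{T}^{(0)}$; this step can only preserve or decrease $wt(S)$. Next, I would eliminate, one at a time, the three basic obstructions to normality: (i) bigon components of $S\cap\sigma$ in a $2$-simplex $\sigma$, (ii) returning arcs on a face (arcs of $S\cap\sigma$ with both endpoints on a single edge of $\sigma$), and (iii) disc components of $S\cap\Delta$ inside a tetrahedron that are neither normal triangles nor normal quadrilaterals.

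Each obstruction is treated by choosing an innermost or outermost representative and performing an ambient isotopy of $S$ across a $3$-cell whose interior is disjoint from $S$. For (i), the bigon together with a parallel disc on one side of $\sigma$ bounds an embedded disc whose complement on one side in the adjacent tetrahedron is a ball; sliding $S$ across this ball decreases $|S\cap\mathcal{T}^{(2)}|$ without altering weight. Obstructions (ii) and (iii) are resolved analogously, invoking the incompressibility and $\partial$-incompressibility of $S$ (which follow from tautness) to ensure that every candidate compressing disc is inessential and realizable by isotopy rather than genuine compression. Crucially, the innermost/outermost choice guarantees that the region swept by the isotopy is disjoint from $\mathcal{T}^{(0)}$, so the weight is monotone non-increasing throughout.

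The process terminates because the lexicographic complexity $\bigl(|S\cap\mathcal{T}^{(2)}|,\ \#\{\text{non-normal discs}\}\bigr)$ strictly decreases at each step. The resulting surface $G$ is normal, is ambient isotopic to $S$, hence is homologous to $S$ with $[G]=[S]$ and $\chi_-(G)=\chi_-(S)$, and therefore is itself taut. The main technical obstacle, and the reason tautness enters the argument decisively, is to guarantee that no normalization move introduces a genuine compression: since $S$ is $\chi_-$-minimizing in its homology class, any essential compressing disc would produce a homologous surface with strictly smaller $\chi_-$, contradicting tautness. This rules out essential compressions at every stage and reduces each normalization move to a weight-non-increasing ambient isotopy, yielding the required $G$ with $wt(G)\leq wt(S)$.
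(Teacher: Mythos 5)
The paper does not reprove this lemma; it is quoted directly from Tollefson and Wang, so I am comparing your sketch against the standard normalization-with-weight argument (Jaco–Oertel, Tollefson–Wang). Your overall plan — normalize $S$ while tracking weight, with tautness supplying incompressibility and $\partial$-incompressibility — is the right skeleton, but the crucial step is treated too optimistically, and the two specific claims you lean on are not correct as stated.

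The gap is in the trivial-circle removal. Take $c$ innermost in a $2$-simplex $\sigma$, bounding $D \subset \sigma$ and, by incompressibility, a disc $D' \subset S$. The sphere $D \cup D'$ bounds a ball $B$ by irreducibility, but $B$ may meet $S$ away from $D'$, so ``push across $B$'' is \emph{not} in general realizable as an embedded ambient isotopy. What one does instead is a cut-and-cap exchange: replace $D'$ by a pushoff of $D$ and discard the resulting trivial sphere. This produces a surface \emph{homologous} to $S$ — not necessarily isotopic — which is precisely why the lemma concludes ``homologous'' and not ``isotopic.'' Your claim that the moves are isotopies, and hence that $G$ is ambient isotopic to $S$, overreaches. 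Relatedly, your stated reason for weight monotonicity (``the swept region is disjoint from $\mathcal{T}^{(0)}$'') is not the operative one and is not true for the ball $B$, which can perfectly well contain vertices and edges. The correct bookkeeping is that the exchange trades a sub-disc $D' \subset S$ of weight $\ge 0$ for a disc in the interior of $\sigma$ of weight $0$, and the returning-arc move removes two points of $S \cap \mathcal{T}^{(1)}$ outright. (Note also that the paper's $wt(S) = |S \cap \mathcal{T}^{(0)}|$ is evidently a typo for $|S \cap \mathcal{T}^{(1)}|$; under a literal reading your transversality step already makes $wt(S) = 0$ and the lemma is vacuous, so the discussion of the $0$-skeleton should throughout concern the $1$-skeleton.)

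Because the moves are exchanges rather than isotopies, tautness of $G$ also needs the slightly longer chain you implicitly skip: $[G] = [S]$ and $\chi_-(G) \le \chi_-(S)$ since exchanges preserve $\chi$ and introduce only discardable trivial components; then $\chi_-$-minimality of $S$ forces $\chi_-(G) = \chi_-(S)$, so $G$ is taut. With these repairs your argument matches the intended one.
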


For a normal surface $S$, we say the carrier $C_S$ is \textit{taut} (\textit{lw-taut}) if every surface carried by $C_S$ is taut (lw-taut). The study of  faces of $\mathcal{P}_\mathcal T$  that carry only minimal weight normal surfaces began with Jaco and Oertel \cite{JacoOertel} which led to an algorithm to detect if a 3-manifold was Haken (contains an essential surface of genus $\geq$ 1). Again from \cite{TollefsonWang} we have the existence of such faces:

\begin{lemma}[Theorem 3.3 in \cite{TollefsonWang}]\label{tautface}
	Let $S$ be an oriented, lw-taut, normal surface. Then $C_S$ is lw-taut and there are unique orientations assigned ot the surfaces of $C_S$ such that if $G,H$ are carried by $C_S$ then $[G+H]=[G]+[H]$.
\end{lemma}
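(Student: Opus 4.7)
The plan is to exploit two additivity properties of the normal surface sum -- that of $\chi_-$ and that of the weight function $wt$ -- together with the subadditivity of the Thurston norm $\mu$ on $H_2(M,\partial M)$. These force every surface carried by $C_S$ to inherit the lw-tautness of $S$. First I would verify tautness of every such surface $F$. By the characterization of the carrier, there exist a positive integer $n$ and a normal surface $G$ carried by $C_S$ with the vector identity $n\vec S = \vec F + \vec G$. Since $\chi$ is additive on normal disc counts, $\chi_-(F)+\chi_-(G) = n\,\chi_-(S)$ after ruling out new sphere or disc components (any such would contradict the $\chi_-$-minimality of $S$). Orienting $F$ and $G$ consistently with $S$ (addressed below), subadditivity of $\mu$ applied to $[F]+[G]=n[S]$ yields
\[
n\,\chi_-(S) = \chi_-(F) + \chi_-(G) \geq \mu([F]) + \mu([G]) \geq \mu(n[S]) = n\,\chi_-(S),
\]
so all inequalities are equalities and $F$ is $\chi_-$-minimizing in $[F]$, hence taut.

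Next I would upgrade tautness to lw-tautness. The weight function is also additive under normal sum, so $wt(F) + wt(G) = n\,wt(S)$. If some taut $F'$ homologous to $F$ had $wt(F') < wt(F)$, a small isotopy makes $F'$ disjoint from $G$, producing a taut representative of $n[S]$ of weight strictly less than $n\,wt(S)$, contradicting the lw-tautness of $nS$. Hence $F$ is lw-taut, which establishes the first assertion of the lemma.

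For the orientations, I would fix an orientation on $S$ and observe that each normal disc type present in $S$ thereby acquires a transverse coorientation. Since any $F$ carried by $C_S$ has $\vec F$ componentwise bounded above by $n\vec S$ for the appropriate $n$, every normal disc of $F$ lies in a type already cooriented from $S$, and I would transfer this coorientation disc-by-disc. Once it is checked that this disc-level data assembles into a coherent global orientation on $F$, the additivity $[G+H] = [G] + [H]$ is immediate, because the $H_2(M,\partial M)$-class can be computed as a signed count of oriented normal discs (equivalently, via pairing with a cellular 1-cocycle), and this count is linear in the defining vector. Uniqueness of the assignment, given the orientation of $S$, is then forced by the additivity requirement.

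The main obstacle is the coherence check in the third step. Concretely, across each face identification of $\mathcal T$ one must verify that two matched normal arcs on discs used by $F$ have compatible inherited transverse orientations, and that the regular switches realizing the normal sum geometrically do not flip these orientations. Both claims reduce to local tetrahedron-by-tetrahedron case checks using that $S$ is already coherently oriented across the same disc types; the matching then persists for any admissible subset of those types. This piece of bookkeeping -- verifying that the orientation on $S$ really does propagate to every element of $C_S$ rather than only to $S$ itself -- is the genuine technical content of the lemma, the additivity/subadditivity arithmetic of the first two steps being essentially formal once the orientation issue is settled.
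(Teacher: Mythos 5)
This lemma is quoted in the paper from Tollefson and Wang (their Theorem~3.3) without a proof, so there is no in-paper argument to compare against; what follows assesses your sketch on its own terms. Your overall strategy — use additivity of $\chi$ and $wt$ under Haken sum together with subadditivity and homogeneity of the Thurston norm to squeeze every $F$ carried by $C_S$ into being norm-minimizing and least-weight — is the right shape of argument, and you correctly flag the orientation-coherence issue as the real technical content. But there are substantive gaps beyond bookkeeping. In Step~1 the chain $n\chi_-(S) = \chi_-(F)+\chi_-(G) \geq \mu([F])+\mu([G]) \geq \mu(n[S])$ already uses $[F]+[G]=[F+G]$, which depends on the coherent orientations you only construct in Step~3; and the direction of the needed inequality is not right: additivity of $\chi$ gives $\chi_-(F)+\chi_-(G)\geq n\chi_-(S)$ a priori (spheres and discs only help that direction), the same direction as the norm chain, so you never obtain a sandwich unless you have independently ruled out new closed trivial components, which is precisely the hard part. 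In Step~2 the claim that "a small isotopy makes $F'$ disjoint from $G$" is unjustified: $F'$ is an arbitrary taut surface homologous to $F$, not one carried by $C_S$, so there is no reason a small isotopy separates it from $G$; making $F'\sqcup G$ into an embedded representative of $n[S]$ requires cut-and-paste along intersection curves, after which neither $\chi_-$ nor $wt$ is obviously controlled. Finally, in Step~3 the coherence check is not merely tetrahedron-by-tetrahedron: one must rule out a global monodromy obstruction around regular switches (essentially what makes the Tollefson--Wang argument nontrivial), and simply asserting that the local checks "persist" does not do this. Filling these gaps would essentially reproduce the Tollefson--Wang proof, so the proposal is best read as a plausible outline rather than a complete argument.
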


A useful property for minimal faces is Theorem 5.1 from Jaco and Segdwick \cite{Jaco98}. The result follows immediately from Theorem 6.5 of Jaco and Tollefson \cite{jacoalgorithms}.

\begin{proposition}[Theorem 5.1 in \cite{Jaco98}]\label{prop:jacosedgwick}
	Let $\mathcal T$ be a triangulation of an irreducible 3-manifold $M$. If $S$ is a two-sided, incompressible, $\partial$-incompressible, normal surface in $\mathcal T$ with least weight, then every rational point in $C_S$ is the projective class of an embedded, incompressible, $\partial$-incompressible, two-sided, normal surface in $\mathcal T$.
\end{proposition}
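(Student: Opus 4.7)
The plan is to take an arbitrary rational point $\vec v \in C_S$ and produce from it an embedded normal surface $T$ with projective class $\vec v$ that is two-sided, incompressible, and $\partial$-incompressible. Let $k$ be the least positive integer for which $k\vec v$ is integral, and let $T$ denote the normal surface with coordinate vector $k\vec v$. Admissibility of $\vec v$ is inherited from $C_S$ (a single face of $\mathcal P_{\mathcal T}$ cannot mix incompatible quadrilateral types in any tetrahedron), so $T$ is a genuinely embedded normal surface. Since $\vec T \in C_S$, the carrier characterization recalled in Section \ref{Norm} furnishes a positive integer $n$ and a normal surface $G$, itself carried by $C_S$, with the Haken-sum identity $nS = T + G$.

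Two-sidedness of $T$ is the first technical step: the transverse framing of $S$ (and hence of $nS$) propagates through the Haken sum, since each regular-exchange resolution at a saddle intersection of $T$ and $G$ inside the sum is compatible with simultaneous consistent transverse orientations on both summands. The core of the proof is incompressibility, argued by contradiction. Weight is additive under Haken sum, so
\begin{equation*}
n\cdot \mathrm{wt}(S) = \mathrm{wt}(T) + \mathrm{wt}(G).
\end{equation*}
Suppose $T$ admits a compressing disc $D$. Because $M$ is irreducible, compressing $T$ along $D$ and then reducing the result to a normal representative by the standard Haken normalization (a sequence of isotopies across tetrahedral regions and removals of trivial bigons and discs, none of which increase weight) produces a normal surface $T'$ with $[T'] = [T]$ in $H_2(M,\partial M)$ and $\mathrm{wt}(T') < \mathrm{wt}(T)$. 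The normal surface $T' + G$ is then homologous to $nS$ and has total weight strictly below $n\cdot \mathrm{wt}(S)$, and extracting an $S$-homologous component produces a surface in $[S]$ of weight less than $\mathrm{wt}(S)$, contradicting the least-weight hypothesis on $S$.

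The analogous argument, with a $\partial$-compressing disc in place of the compressing disc, yields $\partial$-incompressibility of $T$. The principal technical obstacle lies in the normalization step after compression: one must verify both that Haken normalization of the compressed surface preserves its homology class while being weight-nonincreasing, and that the extraction of an $S$-homologous summand from $T' + G$ is rigorously justified (rather than a heuristic passage through homology). Both of these bookkeeping tasks are precisely the content of Theorem 6.5 of Jaco--Tollefson \cite{jacoalgorithms}, from which the proposition follows by repackaging their weight and compression estimates into the projective-solution-space language adopted here.
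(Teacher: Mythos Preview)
The paper does not prove this proposition at all: it is quoted from \cite{Jaco98}, and the only justification given in the text is the single sentence preceding the statement, namely that ``the result follows immediately from Theorem~6.5 of Jaco and Tollefson \cite{jacoalgorithms}.'' Your proposal ultimately lands in the same place, invoking that theorem for the hard work, so at the level of attribution you agree with the paper.

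That said, the heuristic sketch you supply before deferring to \cite{jacoalgorithms} has a genuine gap. After compressing $T$ and renormalizing to a normal surface $T'$, there is no reason the quadrilateral support of $T'$ should remain inside that of $S$; normalization of a compressed surface can introduce quadrilateral types not present in $T$, so $T'$ and $G$ need not be compatible and the expression ``$T'+G$'' need not define an embedded normal surface. Even granting compatibility, your extraction of an $[S]$-homologous piece from $T'+G$ presumes the \emph{lw}-taut homological framework of \cite{TollefsonWang}, whereas the hypothesis here is only that $S$ is least weight in its isotopy class. The actual Jaco--Tollefson argument avoids this detour entirely: it works directly with the exchange annuli and discs of the Haken sum $nS=T+G$, showing that a compressing or $\partial$-compressing disc for a summand forces one for $S$ (or a weight reduction of $S$ itself). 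Your final paragraph correctly identifies that these issues are absorbed by Theorem~6.5 of \cite{jacoalgorithms}, so the conclusion stands, but the intermediate sketch should not be read as an independent argument.
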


If a knot exterior $M_K$ fibers over the circle, Lemma \ref{fibiso} guarantees the fiber $F$ is taut. We  then have a special case Corollary 4.2 of \cite{TollefsonWang}, finding a vertex solution isotopic to $F$.

\begin{lemma}\label{vertexfiber}
	Let $M_K$ be the exterior of a knot in a rational homology 3-sphere which fibers over the circle with fiber $F$. For any triangulation $\mathcal T$ of $M_K$ there exists an lw-taut vertex solution $F'$ isotopic to $F$.
\end{lemma}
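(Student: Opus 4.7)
The plan is to produce the desired vertex solution by first turning the fiber into an lw-taut normal surface, then analyzing its carrier and showing that any vertex solution of $\mathcal{P}_{\mathcal T}$ supported by that carrier is forced to be isotopic to the fiber.

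First I would invoke Theorem \ref{haken} to isotope $F$ to a normal surface in $\mathcal T$, then apply Lemma \ref{tautnorm} to replace this representative by a taut normal surface of no greater weight, iterating (or choosing a minimum-weight taut normal representative in the class $[F]$) until we arrive at an lw-taut normal surface. Since $[F]$ generates $H_2(M_K,\partial M_K;\mathbb{Z})\cong\mathbb{Z}$ and tautness forbids homologically trivial subunions of components, this lw-taut representative is connected and, by Lemma \ref{fibiso}, isotopic to the original fiber; we rename it $F$. By Lemma \ref{tautface} the carrier $C_F$ is lw-taut with a consistent orientation, so every surface carried by $C_F$ is lw-taut, and Proposition \ref{prop:jacosedgwick} says that at every vertex $\vec v_j$ of $C_F$ the corresponding vertex solution $F_{v_j}$ is an embedded, two-sided, incompressible, $\partial$-incompressible normal surface. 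Because $H_2(M_K,\partial M_K;\mathbb{Z})\cong\mathbb{Z}$ we have $[F_{v_j}] = n_j[F]$ for some positive integer $n_j$.

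I would then show $n_j = 1$, so that $F_{v_j}$ is isotopic to $F$ by Lemma \ref{fibiso}. The key observation is that a vertex solution is the primitive integer point on an extreme ray of $\mathcal{S}_{\mathcal T}$ and so is indecomposable: it cannot be written as $\vec A + \vec B$ with $A$ and $B$ non-trivial admissible normal surfaces. In particular $F_{v_j}$ must be connected, since otherwise writing $F_{v_j} = C_1 + C_2$ with $C_1$ one connected component and $C_2$ the union of the others would realize such a decomposition. A connected lw-taut $F_{v_j}$ in class $n_j[F]$ would be a connected $\chi_-$-minimizer in that class; but in a fibered 3-manifold with first Betti number one, Thurston's norm theorem identifies the $\chi_-$-minimizers in class $n[F]$ with fibers of the unique fibration in that class, and that fiber is a disjoint union of $n$ parallel copies of $F$, hence disconnected for $n\ge 2$. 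Thus $n_j = 1$ and we may set $F' = F_{v_j}$.

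The main obstacle is bridging the combinatorial indecomposability of vertex solutions with the topological statement that connected $\chi_-$-minimizers in class $n[F]$ do not exist for $n\ge 2$. The former requires verifying that the decomposition of a normal surface into its connected components is itself a decomposition into admissible (at most one quadrilateral type per tetrahedron) normal surfaces. The latter requires invoking enough of Thurston's norm theory---or a direct argument ruling out a connected, incompressible, $\partial$-incompressible surface of Euler characteristic $n\chi(F)$ representing $n[F]$---to close the loop.
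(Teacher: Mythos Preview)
Your proposal is correct and follows the same overall arc as the paper: normalize $F$ to an lw-taut normal surface, pass to its carrier (which is lw-taut by Lemma~\ref{tautface}), invoke Proposition~\ref{prop:jacosedgwick} to see that the vertex solutions there are two-sided, incompressible, and $\partial$-incompressible, and finish with Lemma~\ref{fibiso}. The one place you diverge is in justifying why some vertex solution lies in the class $[F]$ rather than in a proper multiple $n[F]$. The paper simply asserts this, pointing to the fact that all homology classes carried by the face are of the form $m[F]$ (and implicitly leaning on Tollefson--Wang); you instead argue that vertex solutions are connected (by indecomposability on an extreme ray) and then appeal to Thurston's result that a norm-minimizing incompressible surface in a fibered class is isotopic to a union of fibers, forcing $n_j=1$ for a connected surface.

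Both obstacles you flag dissolve easily. For the first, the components of an embedded normal surface use only the quadrilateral types appearing in the whole, so each component is automatically admissible; combined with primitivity of the vertex solution on its extreme ray, this gives connectedness. For the second, the needed statement---that a taut surface in a fibered class is isotopic to a disjoint union of parallel fibers---is exactly the content of Thurston's analysis of fibered faces in \cite{TNB}, so no further work is required. Your version is, if anything, more explicit than the paper's at this key step.
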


\begin{proof}
	Let $F$ be the fiber of given bundle structure on $M_K$, and let $\mathcal T$ be a triangulation of $M_K$. By Lemma \ref{tautnorm} there exists an lw-taut normal surface isotopic to $F$, which we will simply refer to again as $F$. Let $C_{[F]}$ be the minimal lw-taut face of $\mathcal{P}_\mathcal{T}$ that carries every lw-taut normal surface representing $[F]$. As every homology class in $[C_{[F]}]$ is of the form $m[F]$, there must be some lw-taut vertex solution $F_i$ such that $[F_i]=[F]$. By Proposition \ref{prop:jacosedgwick}, $F_i$ is two-sided, incompressible, and $\partial$-incompressible. Hence by Lemma \ref{fibiso}, $F_i$ is isotopic to $F$.
\end{proof}

\begin{remark}\label{thurstonvert}
	More generally, if $M$ fibers over the circle and the fiber, $F$, is a once-punctured orientable surface, then we will need to construct a normal surface representing $F$ by a sum of vertex solutions. Using the techniques of Thurston \cite{TNB} and Corollary 5.8 of Tollefson and Wang \cite{TollefsonWang}, there is a collection of vertex solutions $\{F_i\}_{i\leq \beta_1}$, where $\beta_1$ is the first Betti number of $M$, such that $\sum F_i$ is isotopic to $F$.
\end{remark}
	
	\section{Crushing and Inflating Triangulations}\label{Sec:3}

	\subsection{Crushing Triangulations along Normal Surfaces}\label{sec:crush}
	The general notion of crushing a triangulation along a normal surface was introduced in \cite{Jaco2003}. The particular triangulations and normal surfaces to which we will be interested in applying this technique behave exceptionally nicely and we omit most details that are not immediately applicable. It is noted that the following applies to both triangulations and ideal triangulations of any compact 3-manifold. Thus there is no issue in restricting to those 3-manifolds of interest to this work. Further, we will omit references to ideal triangulations in the proceeding definitions. See \cite{Jaco14} for a more detailed description of inflations.

Let $\mathcal T$ be a triangulation of $M$. Suppose $S$ is a closed normal surface in $\mathcal T$ and $X$ the closure of a component of the complement of $S$ that does not contain the vertex of $M$. The triangulation $\mathcal T$ induces a cell-decomposition $\mathcal C_X$ on $X$ consisting of four distinct cell types: \textit{truncated-tetrahedra}, \textit{truncated-prisms}, \textit{triangular blocks}, and \textit{quadrilateral blocks}.

We say a 1- or 2-cell of $\mathcal C_X$ is \textit{horizontal} if their interior is in $S$ and \textit{vertical} otherwise. The quadrilateral vertical 2-cells are called \textit{trapezoids}. There are two trapezoids in a truncated-prism, three in  a triangular block, and four in a rectangular block. All other vertical 2-cells are hexagons found in truncated-prisms and truncated-tetrahedra. Define $\mathbb P(\mathcal C_X)=\{\text{vertical edges of }\mathcal C_X\}\cup\{\text{trapezoids}\}\cup \{\text{triangular blocks}\}\cup \{\text{quadrilateral blocks}\}$. If each component $\mathbb P_i$ of $\mathbb P(\mathcal C_X)$ is a product $I$ bundle, then we call $\mathbb P(\mathcal C_X)$ a \textit{combinatorial product} and write $\mathbb P_i=K_i \times I$ where $K_i \times 0$ and $K_i \times 1$ are isomorphic subcomplexes of $S$. If each $K_i$ is simply connected we say $\mathbb P(\mathcal C_X)$ is a \textit{trivial combinatorial product}.

A hexagonal face of a truncated-prism is identified to a hexagonal face of a truncated-tetrahedron or of a (possibly the same) truncated-prism such that the horizontal edges of the faces are identified if and only if the vertices that their lifts separate in the corresponding tetrahedron $\widetilde{\Delta}$ are identified in $\mathcal T$. Define for each hexagonal face of $\mathcal C_X$ a midpoint and an arc connecting the two midpoints in each truncated-prism. The graph $\Gamma$ defined by these arcs and midpoints has components homeomorphic to intervals and copies of $S^1$. The intervals have endpoints in hexagonal faces of truncated-tetrahedra. If each component $\Gamma$ is homeomorphic to an interval we say $\mathcal C_X$ is \textit{simple}. We call the collection of truncated-prisms containing each component of $\Gamma$ a \textit{chain}.

If $C_X$ is simple and $\mathbb P(\mathcal C_X)$ is a trivial combinatorial product, we can construct an ideal triangulation of $X$ by \textit{crushing} the cells of $\mathcal C_X$ as follows: each component of $S$ is crushed to a point, components of $\mathbb P(\mathcal C_X)$ are crushed to edges in the ideal triangulation, truncated-prisms are crushed to faces, and truncated-tetrahedra become ideal tetrahedra, see Figure \ref{fig:crushingmap}.

\begin{figure}
	\includegraphics[width=4.8in]{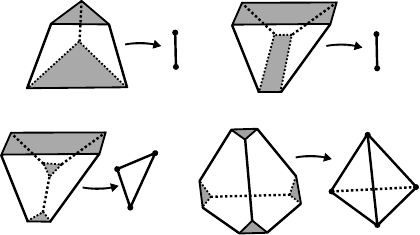}\caption{Shown are the results of crushing each cell type of $\mathcal C_X$. The gray faces represent cells of the normal surface, the rounded vertices represent the ideal vertex of $\mathcal T^*$.}\label{fig:crushingmap}
\end{figure}

It is left only to give the family of face identifications, $\mathbf \Phi^*$ of the ideal tetrahedra. Notice a tetrahedron of $\mathcal T = (\widetilde{\mathbf{\Delta}},\mathbf{\Phi})$ can contain at most one truncated-tetrahedra. Further as $C_X$ is simple, a tetrahedron can have at most one of a truncated-tetrahedron or a truncated-prism. Write the subset of tetrahedra containing a truncated-tetrahedra as $\widetilde{\mathbf{\Delta}}^* = \{\widetilde{\Delta}^*_1,\widetilde{\Delta}^*_2,\dots,\widetilde{\Delta}^*_n\}\subset \widetilde{\mathbf \Delta}$. If two hexagonal faces of truncated-prisms are identified with corresponding identification $\phi \in \mathbf \Phi$, then $\phi \in \mathbf \Phi^*$. Let $C$ be a chain of truncated-prisms and $\Gamma_C$ the corresponding component of the graph $\Gamma$ previously described. The intersection $C \cap S$ has three components, each of which has a single normal arc in the faces of some $\widetilde{\Delta}^*_i,\widetilde{\Delta}^*_j$ (possibly $i=j$) corresponding to the endpoints of $\Gamma_C$. Suppose the lift of the normal arcs separate pairs of vertices $(a_i,a_j)$, $(b_i,b_j)$, and $(c_i,c_j)$ in  $\widetilde{\Delta}^*_i,\widetilde{\Delta}^*_j$ respectively. Then for faces $\sigma_i = \langle a_i b_i c_i\rangle,\sigma_j = \langle a_j b_j c_j\rangle$, define $\phi_C \in \mathbf{\Phi}^*$ by $\phi:\sigma_i \rightarrow \sigma_j$ defined by the rules on the vertices $\phi_C(a_i)=a_j,\phi_C(b_i)=b_j,\phi_C(c_i)=c_j$. Then every face in $\widetilde{\mathbf{\Delta}}^*$ is identified by some map in $\mathbf{\Phi}^*$. We call $\mathcal T^* = (\widetilde{\mathbf{\Delta}}^*, \mathbf{\Phi}^*)$ the ideal triangulation obtained by \textit{crushing} $\mathcal T$ \textit{along} $S$. We use the following version of Theorem 4.1 found in \cite{Jaco2003}:

\begin{theorem}[\cite{Jaco2003}]\label{FTC}
	Suppose $\mathcal T$ is a triangulation of a compact, orientable 3-manifold $M$. Suppose $S$ is a closed normal surface in $\mathcal T$, $X$ is the closure of a component of the complement of $S$ containing no vertex of $\mathcal T$. If $\mathcal C_X$ is simple and $X \neq \mathbb P(\mathcal C_X)$ is a trivial combinatorial product, then $\mathcal T$ can be crushed along $S$ and the triangulation $\mathcal T^* = (\widetilde{\mathbf{\Delta}}^*, \mathbf{\Phi}^*)$ is an ideal triangulation of $X$.
	
\end{theorem}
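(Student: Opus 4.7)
The plan is to verify two things in sequence: first, that the combinatorial data $(\widetilde{\mathbf{\Delta}}^*, \mathbf{\Phi}^*)$ defines a valid complex (every face paired exactly once by a consistent orientation-reversing identification), and second, that the underlying space is homeomorphic to $X$ after collapsing the product pieces $\mathbb{P}(\mathcal{C}_X)$ and the surface $S$, with the only non-manifold points occurring at the ideal vertices coming from components of $S$.

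First I would check that $\mathbf{\Phi}^*$ is a complete and consistent face-pairing on $\widetilde{\mathbf{\Delta}}^*$. Each hexagonal face of a truncated-tetrahedron in $\mathcal{C}_X$ is adjacent either to another truncated-tetrahedron (in which case the identification from $\mathbf{\Phi}$ pairs them directly and we inherit this gluing) or to a truncated-prism. In the latter case, simplicity of $\mathcal{C}_X$ says that the corresponding component $\Gamma_C$ of the midpoint graph is an interval, so the chain $C$ has exactly two endpoints at truncated-tetrahedra $\widetilde{\Delta}^*_i,\widetilde{\Delta}^*_j$; the vertex-to-vertex data $\phi_C$ is forced by the requirement that lifts of the three normal arcs at either end of the chain match. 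Because simplicity further says each tetrahedron of $\mathcal{T}$ contributes at most one truncated-tetrahedron and at most one truncated-prism, every hexagonal face in $\widetilde{\mathbf{\Delta}}^*$ is accounted for in exactly one identification, and orientation reversal is inherited from $\mathbf{\Phi}$.

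Next I would construct the collapse map $c \colon X \to |\mathcal T^*|$ by crushing $\mathbb P(\mathcal C_X)$ along its $I$-direction, collapsing each truncated-prism onto a triangular face, and pinching each component of $S$ to a single point. On each truncated-tetrahedron, $c$ is a homeomorphism (after truncation) onto the corresponding ideal tetrahedron. Because $\mathbb P(\mathcal C_X)$ is a \emph{trivial} combinatorial product, each $\mathbb P_i = K_i \times I$ with $K_i$ simply connected, so the fiber of $c$ over any non-ideal point is either a single point or a contractible interval/disc; hence $c$ is a homotopy equivalence on the complement of $S$ and restricts to a homeomorphism away from $\mathbb P(\mathcal C_X) \cup S$. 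The hypothesis $X \neq \mathbb P(\mathcal C_X)$ guarantees at least one truncated-tetrahedron survives, so $\widetilde{\mathbf{\Delta}}^*$ is non-empty and $|\mathcal T^*|$ has the correct dimension. Checking that links of edges and faces in $|\mathcal T^*|$ are circles and intervals respectively (so that the quotient is a manifold away from the ideal vertices) is a routine link calculation in truncated-tetrahedra using the identifications just constructed.

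The main obstacle will be the bookkeeping in the second step: showing that the trapezoidal faces on the boundary of $\mathbb P(\mathcal C_X)$ collapse compatibly with both the hexagonal-face gluings along chains and the identification of components of $S$ to ideal vertices. Triviality of the combinatorial product is essential here: if some $K_i$ failed to be simply connected, collapsing $\mathbb P_i$ would produce $\pi_1$-classes unrelated to $X$ and create non-manifold points away from the ideal vertex set, destroying the ideal triangulation property. Once one has carefully verified that the collapse of each $\mathbb P_i$ commutes with the edge-and-face identifications prescribed by $\mathbf \Phi^*$, the remaining verification that $|\mathcal T^*| \setminus \{v^*_i\} \cong X \setminus S$ (and hence is homeomorphic to $\mathring X$ after deleting the vertex-linking collars) reduces to standard quotient-space arguments.
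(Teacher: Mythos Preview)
This theorem is not proved in the paper at all: it is quoted as Theorem~4.1 of \cite{Jaco2003} and simply invoked where needed. There is therefore no ``paper's own proof'' to compare against; the authors treat it as a black box imported from Jaco--Rubinstein.

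That said, your outline is a reasonable sketch of how the Jaco--Rubinstein argument actually goes. A couple of remarks on where it could be tightened. First, your claim that ``simplicity further says each tetrahedron of $\mathcal{T}$ contributes at most one truncated-tetrahedron and at most one truncated-prism'' is not quite how simplicity is defined here: a tetrahedron meeting $S$ in more than one quadrilateral would contain multiple truncated-prisms, and simplicity (no circular chains) by itself does not forbid this. What rules it out is the admissibility of the normal surface $S$ combined with the fact that $X$ lies on one side of $S$, so you should separate those hypotheses. Second, the step where you assert that the collapse map $c$ is a homeomorphism off the ideal vertices deserves more than ``routine link calculation'': in the original and in this paper's later use (see the proof of Theorem~\ref{crushsurface}) the authors appeal to results on cell-like maps \cite{armentrout, sieb} to promote the quotient map to a homeomorphism, and that is the cleanest way to handle the compatibility bookkeeping you flag as the main obstacle.
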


If one crushes all cells of $\mathcal C_X$ except for the truncated tetrahedra, the resulting cell-complex is the same cell-decomposition of $X$ described in Section \ref{Norm} used to compute the boundary-slope of spun-normal surfaces. Attaching the cone over the unglued triangular faces of this cell-complex recovers the ideal triangulation $\mathcal T^*$ of $X$ in Theorem \ref{FTC} above.
	
	\subsection{Inflating Triangulations}\label{sec:inflate}
	
	Let $\mathcal T$ be a triangulation of a 3-manifold $M$. We say $\mathcal T$ is a \textit{minimal-vertex triangulation} if for any triangulation $\mathcal T'$ of $M$ $|\mathcal T^{(0)}|\leq |\mathcal T'^{(0)}|$. If $\partial M$ is connected, then a minimal-vertex triangulation has exactly one vertex \cite{Jaco2003}, inducing a 1-vertex triangulation on $\partial M$; when $\partial M $ is homeomorphic to a torus, this is the standard two-triangle triangulation of the torus.

If $\mathcal T$ has normal boundary and can be crushed along the boundary-linking surface, we say $\mathcal T$ can be \textit{crushed along}  $\partial M$. Let $\mathcal T^*$ be an ideal triangulation of $M$, $\mathcal T$ a triangulation of $M$. We say $\mathcal T$ is an \textit{inflation of} $\mathcal T^*$ if $\mathcal T$ has normal boundary and can be crushed along $\partial M$ such that the ideal triangulation obtained by crushing along $M$ is isomorphic to $\mathcal T^*$. More generally if $\mathcal T$ has the above properties we say $\mathcal T$ is an \textit{inflation triangulation} when we do not wish to emphasize $\mathcal T^*$.

For a given an ideal triangulation, Jaco and Rubinstein \cite{Jaco14} give an algorithm to construct an inflation triangulation. We will briefly discuss here the process of adding tetrahedra to the ideal triangulation and the characterization of each tetrahedron in relation to the boundary-linking surface. We will refer the reader to \cite{Jaco14} for the full algorithm.

For a surface $S$, a \textit{spine} is an embedded 1-complex such that each component of its complement is an open disc. Given a triangulation of $S$, a subcomplex $\xi$ of the 1-skeleton is called a \textit{frame} if it is a spine of $S$ and is minimal among all spines of $S$ relative to set inclusion. Notably, the complement of a frame is connected and for the torus there are, up to homeomorphism, two possible frames.

A vertex of a frame is called a \textit{branch point} if it has index (in the frame) greater than two. The closure of a component of the complement of a branch point is called a \textit{branch}. Given a frame, $\xi$, of a triangulated torus there is either one branch point of index four or two branch points of order 3. Further, there are either two branches or three branches respectively.

Given an ideal triangulation $\mathcal T^*$ of $M$, recall in the context of this work $\mathcal T^*$ has a single ideal vertex. Let $S$ be the vertex-linking surface in $\mathcal T^*$ and let $\xi$ be a frame of $S$ in the triangulation given by the normal triangles. The inflation begins with the tetrahedra in $\widetilde{\mathbf{\Delta}}^*$. Given an edge $\varepsilon$ of $\xi$ in a 2-simplex $\sigma$ of $\mathcal T^*$, we unglue the two tetrahedra at $\sigma$ and attach a new tetrahedron $\widetilde b_\varepsilon$ called a \textit{band tetrahedron} as in Figure \ref{fig:edgeinflate}. If there are multiple edges of $\xi$ in $\sigma$ we repeat this process. If for a given edge, $e$ of $\mathcal T^*$ there are edges of $\xi$ ending at $e$ such that their projection along $e$ onto a regular disc neighborhood of the corresponding vertex in $S$ has a crossing, then we add a new tetrahedron $\widetilde c_e$ called a \textit{crossing tetrahedron}. Finally, for each branch point, $v$, we add $index(v)-2$ new tetrahedra $\widetilde p_1,\widetilde p_2$ called \textit{branch-point tetrahedra}. Face identifications are determined as in \cite{Jaco14} so that $\mathcal T_\xi=(\widetilde{\mathbf{\Delta}}_\xi,\Phi)$ is a minimal vertex triangulation of $M$, where $\widetilde{\mathbf{\Delta}}_\xi = \widetilde{\mathbf{\Delta}}^* \sqcup \{\widetilde b_\varepsilon\} \sqcup \{\widetilde c_e\} \sqcup \{\widetilde p_i\}$. The triangulation $\mathcal T_\xi$ has normal boundary and admits a crushing along $\partial M$ yielding $\mathcal T^*$.

The lift of the cells of the boundary-linking surface $B$ meet the tetrahedra of $\widetilde{\mathbf{\Delta}}_\xi$ in a distinguishable way. Let $X\cong M$ be the closure of the component of the complement of $B$ not containing the vertex of $\mathcal T_\xi$. Each tetrahedron of $\widetilde{\mathbf{\Delta}}_\xi$ contains exactly one 3-cell of the induced cell-decomposition $\mathcal C_X$. Namely, for $\widetilde{\Delta}^* \in \widetilde{\mathbf{\Delta}}^*$, $\widetilde{\Delta}^*$ contains a truncated-tetrahedron, $\widetilde b_\varepsilon$ contains a truncated-prism, $\widetilde c_e$ contains a quadrilateral block, and $\widetilde p_i$ contains a triangular block. The chains of truncated-prisms have length at most three. For each edge $\varepsilon$ in $\xi$ not incident to a branch point, the face of a band tetrahedron $\widetilde b_\varepsilon$ containing the trapezoidal face of the truncated-prism is glued to another band tetrahedron or some $\widetilde c_e$ such that the normal arc corresponding to the normal quadrilateral glues to the normal arc of a normal quadrilateral in the to be identified tetrahedron. If $\widetilde b_\varepsilon$ corresponds to an edge ending at a branch point, the appropriate face containing the trapezoidal face of the truncated-prism glues to a face of some $\widetilde p_i$ or if necessary some $\widetilde c_e$ which is in turn glued to $\widetilde p_i$. Exactly one face of each $\widetilde p_i$ remains unglued, together forming the material boundary of $\mathcal T_\xi$.

\begin{remark}\label{naming}
	The above description can be used to identify a given triangulation $\mathcal T = (\widetilde{\mathbf{\Delta}},\mathbf{\Phi})$ of $M$ as the inflation of some ideal triangulation $\mathcal T^*$ of $M$. In particular, we can write $\widetilde{\mathbf{\Delta}} = \widetilde{\mathbf{\Delta}}^* \sqcup \{\widetilde b_i\} \sqcup \{\widetilde c_j\} \sqcup \{\widetilde p_k\}$, where $B,X$ are given as above. Here  $\widetilde{\mathbf{\Delta}}^*$ is the collection of tetrahedra containing a truncated-tetrahedron of $\mathcal C_X$, $\{\widetilde b_i\}$ the collection containing a truncated-prism, $\{\widetilde c_j\}$ the collection containing a quadrilateral block, $\{\widetilde p_k\}$ the collection containing a triangular block. Finally, $\mathcal C_X$ must be simple; from the inflation procedure a chain can contain at most three truncated prisms.
\end{remark} 

We refer to the latter three sets as \textit{band tetrahedra, crossing tetrahedra}, and \textit{branch-point tetrahedra}. The band tetrahedra form subcomplexes of $\mathcal T$ defined by their identifications at faces containing trapezoids of $\mathcal C_X$. If $\mathcal T$ contains no crossing tetrahedra, these subcomplexes are in 1-1 correspondence with the branches of some frame of the associated ideal triangulation obtained by crushing $\partial M$. In this case we refer to the subcomplexes of band tetrahedra again as \textit{branches}. The frame can be recovered by the relation of the boundary-linking surface and vertex-linking surface after crushing as depicted in Figure \ref{fig:edgeinflate}. 

\begin{figure}
	\includegraphics[width=4.3in,height=2in]{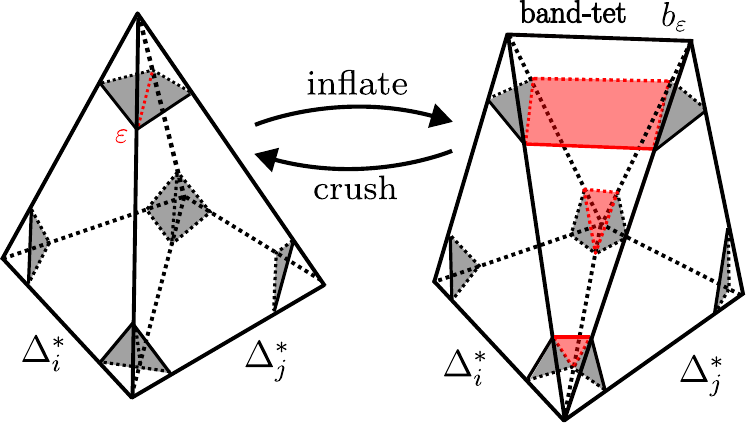}\caption{The step of inflating at the edge $\varepsilon$ of frame $\xi$. Note the truncated prism in $b_\varepsilon$ and the hexagonal cells in the faces common to $b_\varepsilon$ and $\Delta^*_i,\Delta^*_j$.}\label{fig:edgeinflate}
\end{figure}
	
	\subsection{Short Inflations}\label{sec:short}
	
	Given an inflation triangulation $\mathcal T_\xi$ of $M$, we are interested in constructing a triangulation $\mathcal T'$ with minimal number of band tetrahedra and crossing tetrahedra that is also an inflation triangulation for some ideal triangulation $\mathcal T^*$. Given an inflation triangulation $\mathcal T$, define the \textit{length of the inflation} by $len(\mathcal T) = \beta +2\gamma$ where $\beta$ is the number of band tetrahedra and $\gamma$ is the number of crossing tetrahedra. 

A common set of combinatorial transformations of triangulations are the \textit{Pachner moves} (\textit{bistellar flips}) \cite{Pachner1978}. Let $\Delta_i, \Delta_j$ be two distinct tetrahedra identified at face $\sigma$ in $\mathcal T$. A \textit{2-3 move at} $\sigma$ is performed at the \textit{bi-pyramid}, $P$, formed by the union of the two tetrahedra $\widetilde\Delta_i,\widetilde\Delta_j$ by removing $\tilde \sigma$ and the two tetrahedra and replacing them by the triangulation of the bi-pyramid given by three tetrahedra $\widetilde{\Delta}'_1,\widetilde{\Delta}'_2,\widetilde{\Delta}'_3$ arranged around a new edge having endpoints the vertices of $P$ not contained in $\sigma$, giving a new triangulation $\mathcal T'$ of $M$. If $\widetilde\Delta_i$ and $\widetilde \Delta_j$ have unglued faces which share an edge in the material boundary of $\mathcal T$ then the bi-pyramid is referred to as a \textit{square pyramid}, the two unglued faces being the \textit{square base}. A \textit{2-2 move} can be performed on a square pyramid by removing $\widetilde\Delta_i,\widetilde \Delta_j$ and replacing them with a triangulation $\widetilde\Delta_i',\widetilde \Delta_j'$ of the square pyramid characterized by changing the diagonal on the the square base.

\begin{remark}\label{23normal}
	Let $\mathcal T$ be a triangulation of $M$, and $\mathcal T'$ a triangulation of $M$ obtained after some 2-3 move on $\mathcal{T}$. If $S$ is a normal surface in $\mathcal T$, then there exists a normal surface $S'$ in $\mathcal T'$ such that $S$ is isotopic to $S'$ as properly embedded surfaces in $M$. Similarly, if the cell-decomposition on $S$ does not include both of the two normal quadrilateral types that separate the diagonal of the base of a square pyramid, then after a 2-2 move on the square pyramid there is again a normal surface $S'$ in $\mathcal T'$ isotopic to $S$. Further if $\Delta, \Delta'$ are corresponding tetrahedra in $\mathcal T, \mathcal T'$ unaffected by the Pachner move, then the normal discs of $S,S'$ in $\Delta,\Delta'$ are in 1-1 correspondence.  If $S$ meets the bi-pyramid $P$ in a disc formed by a normal quadrilateral as in the left of Figure \ref{fig:23effect}, then after a 2-3 move the disc is divided into two normal quadrilaterals and one normal triangle in $S'$. If $S$ meets $P$ in a disc formed by two normal quadrilaterals as in the right of Figure \ref{fig:23effect}, then after a 2-3 move the disc is divided into two normal triangles and one normal quadrilateral in $S'$.
\end{remark}

\begin{figure}
	\includegraphics[width=5.8in]{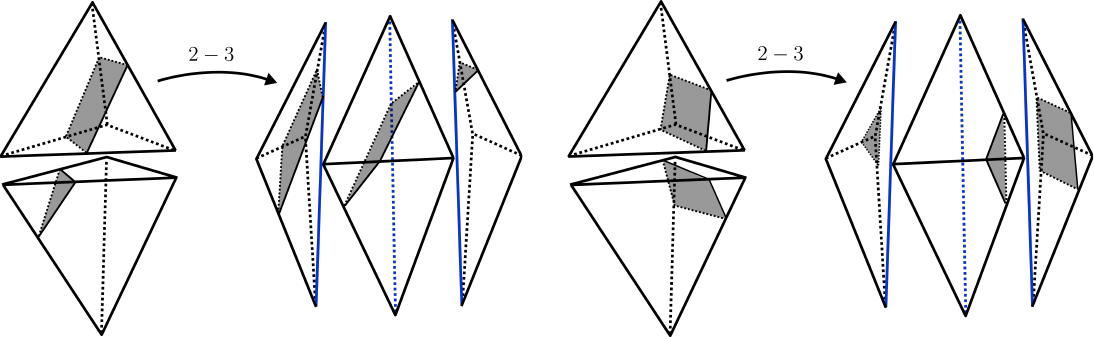}\caption{Shown are subcomplexes of a normal surface meeting a bi-pyramid before and after a 2-3 move.}\label{fig:23effect}
\end{figure}

By Remark \ref{23normal} any such triangulation $\mathcal T'$ obtained by a 2-3 move on an inflation triangulation has normal boundary. We wish to consider two particular 2-3 moves:  \textit{shortening moves}, a 2-3 move performed at a face corresponding to a trapezoid in $\mathcal C_X$ that is common to two band tetrahedra or shared by a crossing tetrahedron and a band tetrahedron; and \textit{site-swapping moves} (or simply \textit{site-swaps}), a 2-3 move performed at a face corresponding to a trapezoid common to a band tetrahedron and a branch-point tetrahedron.

\begin{proposition}\label{shortin}
	Let $\mathcal T$ be an inflation triangulation of $M$, $\mathcal T'$ a triangulation obtained by performing a shortening move. Then $\mathcal T'$ is an inflation triangulation. Further, $len(\mathcal T') \leq len(\mathcal T)$.
\end{proposition}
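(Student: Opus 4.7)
The plan is to show two things about the 2-3 move producing $\mathcal T'$ from $\mathcal T$: first, that $\mathcal T'$ retains the defining properties of an inflation triangulation; second, that $len(\mathcal T')\le len(\mathcal T)$.

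For normal boundary, I would apply Remark \ref{23normal} to the boundary-linking surface $B$. Since $\sigma$ corresponds to a trapezoidal 2-cell of $\mathcal C_X$ and both tetrahedra in the bi-pyramid $P$ carry normal quadrilaterals of $B$ (both being band, or one band and one crossing), these two quadrilaterals meet along the horizontal edge of the trapezoid; this is the right-hand configuration of Figure \ref{fig:23effect}. Thus there is a normal surface $B'$ in $\mathcal T'$ isotopic to $B$, and the three new tetrahedra together carry exactly one normal quadrilateral and two normal triangles of $B'$. To check that $\mathcal T'$ admits a crushing along $\partial M$, I would use Remark \ref{naming} to classify each new tetrahedron by the 3-cell it contains in the updated decomposition $\mathcal C_{X'}$: the one meeting $B'$ in a quadrilateral contains either a truncated-prism or a quadrilateral block, so acts as a band or crossing tetrahedron, while the other two meet $B'$ only in normal triangles and contain truncated-tetrahedra, so act as special tetrahedra.

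To verify the hypotheses of Theorem \ref{FTC} for $\mathcal C_{X'}$, I would note that outside $P$ the decomposition is unchanged, while inside $P$ the modification fixes $\partial P$. Hence the chain of truncated-prisms passing through $P$ in $\mathcal C_X$ still runs between the same hexagonal endpoints on $\partial P$; in the band-band case it shortens by one prism, and in the band-crossing case it is reconfigured near its terminal quadrilateral block. No cyclic chain arises, so simplicity is preserved. The local I-bundle structure within $P$ inherits triviality since $P$ lies in a product neighborhood of $B$. Together with the preceding paragraph, this shows $\mathcal T'$ is an inflation triangulation.

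Finally, for the length count: in the band-band case, two band tetrahedra are replaced by one tetrahedron meeting $B'$ in a quadrilateral together with two special tetrahedra, yielding either $(\beta,\gamma)\mapsto(\beta-1,\gamma)$ if the new quadrilateral-bearing tetrahedron is a band, or $(\beta-2,\gamma+1)$ if it is a crossing; thus $len$ drops by $1$ or is unchanged. In the band-crossing case, the replaced pair contributes $3$ to $len$ while the replacement contributes $1$ or $2$; thus $len$ drops by $2$ or $1$. In all cases $len(\mathcal T')\le len(\mathcal T)$. The main obstacle will be the combinatorial verification in the second paragraph, since one must carefully track how the faces of the two replaced tetrahedra are inherited by the three replacements; Figure \ref{fig:edgeinflate} provides the relevant local model.
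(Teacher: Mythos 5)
Your overall strategy --- track how the boundary-linking surface $B$ passes through the bi-pyramid $P$, classify the three replacement tetrahedra by the 3-cell of $\mathcal C_{X'}$ they contain, verify the hypotheses of Theorem \ref{FTC}, and count --- is the same as the paper's. However, there is a genuine gap in your treatment of the band--crossing case. You assume that both tetrahedra of $P$ contribute one normal quadrilateral of $B$ each, so that $B\cap P$ contains a single two-quad disc (the right configuration of Figure \ref{fig:23effect}) which resolves into a single quadrilateral after the 2-3 move. But a crossing tetrahedron contains \emph{two} normal quadrilaterals of $B$, not one: the quadrilateral block has four trapezoidal faces, so it is bounded by two quadrilaterals of $B$ on top of the band tetrahedron's one. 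Consequently $B\cap P$ has three quadrilaterals, and the paper's analysis (Case 1) shows that after the 2-3 move each of the three new tetrahedra contains exactly one quadrilateral of $B'$ together with two normal triangles, i.e.\ each carries a truncated prism. All three are band tetrahedra, giving $(\beta,\gamma)\mapsto(\beta+2,\gamma-1)$ and $len(\mathcal T')=len(\mathcal T)$ --- not a drop of $1$ or $2$ as you claim.

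There is also an unresolved ambiguity in your band--band case: you leave open whether the unique replacement tetrahedron meeting $B'$ in a quadrilateral is a band or a crossing tetrahedron. But a tetrahedron containing exactly one quadrilateral of $B$ contains a truncated prism, hence is a band tetrahedron by the characterization in Remark \ref{naming}; a crossing tetrahedron requires two quadrilaterals of $B$. So $(\beta,\gamma)\mapsto(\beta-1,\gamma)$ definitively, not the alternative $(\beta-2,\gamma+1)$. This precision matters: your statement $len(\mathcal T')\le len(\mathcal T)$ is technically correct, but Lemma \ref{shorty} relies on knowing that Case 1 eliminates a crossing tetrahedron without producing new ones and that Case 2 strictly reduces the band count; neither follows from the alternatives you allow. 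Your simplicity and trivial-product arguments are in the right spirit but should be tightened to the explicit cell-by-cell identification the paper gives (tracing the chain through the three new prisms in Case 1 and noting that the new trapezoid's component of $\mathbb P(\mathcal C'_X)$ is still a trivial $I$-bundle over a disc).
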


\begin{proof}
	Let $M$ and $\mathcal T$ be given and let $\beta=|\{\widetilde b_i\}|$ be the number of band tetrahedra, $\gamma=|\{\widetilde c_j\}|$ be the number of crossing tetrahedra. Let $B$ be the boundary-linking surface and let $\mathcal C_X$ be the induced cell-decomposition of $X$ the closure of the component of the complement of $B$ not containing the vertex of $\mathcal T$. After a 2-3 move the normal surface $B$ will be identified with an isotopic normal surface which we will also refer to as $B'$. We proceed by giving the case for each class of shortening move:
	
	\begin{mycase}
		
		\case\textbf{(between a band tetrahedron and a crossing tetrahedron)}
		Let $b_i,c_j$ be a band tetrahedron and a crossing tetrahedron respectively, identified at face $\sigma$, the lift of which contains a trapezoidal face of $\mathcal C_X$. There are two distinct edges $\tilde e_1 = \langle ab\rangle, \tilde e_2=\langle cd\rangle$ in the lift $\tilde c_j$ that lie in the material boundary of $\mathcal T$. One of these edges, say $\tilde e_1$, also lies in $\tilde b_i$. Consider the lifts of the intersection of $B$ with the bipyramid $P$ formed from $\tilde b_i,\tilde c_j$. After performing a 2-3 move at $\sigma$, the unique tetrahedron $\widetilde{\Delta}_{\tilde e_1}$ containing $\tilde e_1$ contains a normal quadrilateral of $B'$ separating $\tilde e_1$. For the two vertices of $\widetilde{\Delta}_{\tilde e_1}$ not in the closure of $\tilde e_1$ the normal triangle that separates the respective vertex is in $B'$. The other two tetrahedra have the edge $\tilde e_2$ in common. In each, there is a single quadrilateral separating $\tilde e_2$, and for the appropriate vertices not in the closure of $\tilde e_2$ a normal triangle separating that vertex. Thus the intersection of each tetrahedra of $B'$ with $P$ forms a truncated prism in the induced cell-decomposition $\mathcal C'_X$.  As $\mathcal C_X$ is simple, by Remarks \ref{23normal} and \ref{naming} the  cell-decomposition $\mathcal C'_X$ induced on $X$ by $\mathcal T'$ is simple. There is a trapezoidal face in the interior of $P$, the closure of which is connected; the preimage of all other components $\mathbb P'_i$ of $\mathbb P(\mathcal C'_X)$ are a component $\mathbb P_j$ of $\mathbb P(\mathcal C_X)$. Thus $\mathbb P(\mathcal C'_X)$ is a trivial combinatorial product. By Theorem \ref{FTC}, $\mathcal T'$ is an inflation triangulation of $M$. The number of band tetrahedra $\beta'=\beta+2$, the number of crossing tetrahedra $\gamma'=\gamma-1$; hence $len(\mathcal T')=len(\mathcal T)$.

		\case \textbf{(between band tetrahedra)}
		Let $b_i, b_j$ be two band tetrahedra identified at face $\sigma$ containing a trapezoidal face of $\mathcal C_X$. Each vertex of the associated bi-pyramid $P$ gets identified to the sole vertex of $\mathcal T$ and a single edge in $P$ is the lift of some edge, $e$ in the material boundary of $\mathcal T$.  Consider the lifts of the intersection of the boundary-linking surface $B$ with $P$. After the 2-3 move at $\sigma$, suppose $\widetilde\Delta_{\tilde e}$ is the unique tetrahedra in $P$ containing an edge, $\tilde e$, in the lift of $e$. The associated normal discs of $B'$ in $\widetilde{\Delta}_{\tilde e}$ are then a normal quadrilateral separating $\tilde e$ and two normal triangles separating the two vertices not in $\tilde e_k$. The associated normal discs in the remaining two tetrahedra  $\widetilde\Delta_2, \widetilde\Delta_3$ of $P$ are one of each of the four normal classes of normal triangle discs in each respective tetrahedra. As $\mathcal C_X$ is simple, by Remarks \ref{23normal} and \ref{naming} the  cell-decomposition $\mathcal C'_X$ induced on $X$ by $\mathcal T'$ is simple. Further the preimage of a component $\mathbb P'_i$ of $\mathbb P(\mathcal C'_X)$ is a component $\mathbb P_j$ of $\mathbb P(\mathcal C_X)$. Thus $\mathbb P(\mathcal C'_X)$ is a trivial combinatorial product. By Theorem \ref{FTC}, $\mathcal T'$ is an inflation triangulation of $M$. The number of band tetrahedra $\beta'=\beta-1$, the number of crossing tetrahedra $\gamma'=\gamma$; hence $len(\mathcal T')=len(\mathcal T)-1$.
		
\end{mycase}\end{proof}

\begin{lemma}\label{shorty}
	Let $M$ be an orientable, compact, irreducible, $\partial$-irreducible 3-manifold with non-empty connected boundary homeomorphic to a torus. There exists an inflation triangulation $\mathcal T$ of $M$ having length 2. Further, there is an algorithm to construct such a $\mathcal T$ from any given ideal triangulation $\mathcal T^*$.
\end{lemma}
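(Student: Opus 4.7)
The plan is to construct $\mathcal{T}$ by starting from the Jaco--Rubinstein inflation of $\mathcal{T}^*$ associated to a carefully chosen frame and then iteratively applying the shortening moves of Proposition \ref{shortin} until the length drops to $2$.

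First, I select a frame $\xi$ of the vertex-linking torus $S$ of wedge-of-two-circles type: one branch point of index four, giving exactly two branches. Applying the Jaco--Rubinstein inflation to $(\mathcal{T}^*, \xi)$ produces $\mathcal{T}_\xi$ with $\beta_0 = |\xi^{(1)}|$ band tetrahedra, some number $\gamma_0 \geq 0$ of crossing tetrahedra, and two branch-point tetrahedra $\widetilde{p}_1, \widetilde{p}_2$ (since $\mathrm{index}(v) - 2 = 2$). Hence $\mathrm{len}(\mathcal{T}_\xi) = \beta_0 + 2\gamma_0$.

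Second, I eliminate the crossings and then reduce the bands. At each crossing tetrahedron $\widetilde{c}_e$ there is a trapezoidal face shared with an adjacent band tetrahedron; a band-crossing shortening there (Case 1 of Proposition \ref{shortin}) preserves the length while removing $\widetilde{c}_e$ and inserting two new band tetrahedra. After $\gamma_0$ iterations I obtain an inflation $\mathcal{T}_1$ with no crossings and $\beta_1 = \beta_0 + 2\gamma_0$ bands. Since $\mathcal{T}_1$ has $\gamma = 0$, its band tetrahedra partition into branch subcomplexes---exactly two of them, by the choice of $\xi$. I then repeatedly apply band-band shortenings: within any branch of length $k \geq 2$, adjacent bands share a trapezoidal face, and Case 2 of Proposition \ref{shortin} provides a shortening reducing $\beta$ by $1$ while preserving the inflation property. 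Iterating until every branch has length $1$ produces an inflation $\mathcal{T}$ with $\beta = 2$ and $\gamma = 0$, so $\mathrm{len}(\mathcal{T}) = 2$. Termination is immediate, since length is a non-negative integer that strictly decreases under each band-band shortening, and every step is a finite combinatorial check, yielding the desired algorithm.

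The main obstacle is verifying that the underlying frame stays of wedge-of-two-circles type throughout the procedure---if a move inadvertently produced a theta-type frame, termination could occur at length $3$ rather than $2$. This reduces to tracking the two branch-point tetrahedra $\widetilde{p}_1, \widetilde{p}_2$ (which jointly realize the unique index-$4$ branch point) through each shortening and confirming that none of the moves touches them: both band-crossing and band-band shortenings act on bipyramids made solely of band and crossing tetrahedra, so $\widetilde{p}_1, \widetilde{p}_2$ persist and continue to realize the same branch point. A secondary technical point is that after repeated band-crossing shortenings, chains of truncated prisms may exceed the length-$3$ bound stated for the Jaco--Rubinstein output in Remark \ref{naming}; one must verify that the induced cell-decomposition $\mathcal{C}_X$ remains simple at each step and that its truncated-prism chains continue to terminate at the $\widetilde{p}_i$'s as before. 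Finally, should an intermediate stage present no distinct adjacent bands at which to apply Case 2, a preliminary site-swap across a branch-point tetrahedron---which preserves the length---exposes the required band-band interface.
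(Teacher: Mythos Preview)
Your argument follows the same skeleton as the paper's proof---inflate along a frame, absorb crossings via Case~1 of Proposition~\ref{shortin}, then collapse bands via Case~2---but it rests on an unjustified step: the opening assertion that you may ``select a frame $\xi$ of the vertex-linking torus of wedge-of-two-circles type.'' A frame is by definition a subcomplex of the $1$-skeleton of the induced triangulation on the vertex-linking torus, and nothing in the paper (or in your proposal) establishes that this particular $1$-skeleton always contains a minimal spine with a single index-$4$ branch point rather than only theta-type spines. The paper sidesteps this question entirely: it takes an \emph{arbitrary} frame, arrives at an inflation with either two or three branches, and in the three-branch case performs a single site-swap at a band/branch-point face to convert to a two-branch inflation before finishing with Case~2. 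Your shortcut would yield a slightly cleaner argument if you could prove that a wedge-type frame always exists in the vertex-linking triangulation; absent that, the theta case is genuinely uncovered.

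Two smaller points. First, your concern about chains of truncated prisms exceeding length~$3$ after repeated shortenings is unfounded: the ``at most three'' in Remark~\ref{naming} describes the Jaco--Rubinstein output, not a hypothesis for being an inflation triangulation, and Proposition~\ref{shortin} already verifies simplicity and triviality of $\mathbb P(\mathcal C'_X)$ after each move. Second, your closing contingency---that a site-swap ``preserves the length'' and can be used to expose a band--band interface---is incorrect: a site-swap is a $2$--$3$ move, and in the paper's three-branch reduction it takes three bands to four (in two branches) before Case~2 brings the count back down. In your intended wedge-type setup this fallback is never needed anyway, since the only way Case~2 can stall is when both branches already have a single band tetrahedron, i.e.\ when $\mathrm{len}(\mathcal T)=2$.
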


\begin{proof}
	Let such a 3-manifold $M$ be given, and let $\mathcal T^*$ be an ideal triangulation of $M$. By \cite{Bing, Jaco2003, Moise} there exists an algorithm to construct an ideal triangulation of $M$. Let $\mathcal T_\xi$ be an inflation of $\mathcal T^*$ constructed as in \cite{Jaco14} by way of the frame $\xi$. By Case 1 of Proposition \ref{shortin}, we may form an inflation triangulation $\mathcal T'$ having no crossing tetrahedra. As $\xi$ is a frame and $\partial M$ is a torus, $\xi$ must consist of exactly two or three branches. 
	
	First, assume $\mathcal T'$ has two branches. Then repeated application of Case 2 of Proposition \ref{shortin} reduces the number of band tetrahedra to two and the conclusion of the lemma holds.
	
	Now, assume $\mathcal T'$ has three branches. By repeated application of Case 2 of Proposition \ref{shortin}, we construct an inflation triangulation $\mathcal T''$ having three branches, each containing one band tetrahedron. Choose $b_i$, a band tetrahedron. Note each face of $b_i$, corresponding to a trapezoid of the induced cell-decomposition $\mathcal C_X$ on the complement of the boundary-linking surface $S'$ in $\mathcal T'$ is identified to one of, $p_j,p_k$, the two branch-point tetrahedra. Let $\sigma$ be the common face of $b_i$ and $p_j$ and consider a site-swap at $\sigma$. Let $p'$ be the tetrahedron of $P$ after the 2-3 move  which contains a face, $\tau$, included in the material boundary of $\mathcal T''$. The boundary-linking surface, $S''$ in $\mathcal T''$, meets $\widetilde p'$ in two normal triangles parallel to $\tau$, thus $\widetilde p'$ is a branch-point tetrahedron. The face of $P$ in $\tilde p'$ that corresponds to the face in $\tilde b$ identified to $\tilde p_k$ remains identified to $\tilde p_k$. This implies $\mathcal T''$ is an inflation triangulation with two branches. Again, repeated application of Case 2 of Proposition \ref{shortin} reduces the number of band tetrahedra to two completing the proof.
\end{proof}

We will now establish a relationship between normal surfaces with boundary in a short inflation $\mathcal T$ and spun-normal surfaces in the ideal triangulation obtained by crushing $\mathcal T$ along $\partial M$. First, we note as $\mathcal T$ is a minimal-vertex triangulation of $M$, $\partial M$ has an induced triangulation given by the two unglued faces of $\mathcal T$. For a normal surface $S$ with nonempty boundary, $\partial S$ has an induced cell-decomposition consisting of normal arcs in the material boundary of $\mathcal T$. If $S$ is incompressible and $\partial$-incompressible, then each component of $\partial S$ is non-trivial in $\pi_1(\partial M)$. The topology of $\partial M$ then imposes a restriction on the normal arcs of $\partial S$.

\begin{lemma}[Theorem 3.6 of \cite{Jaco98}]\label{lem:arc}
	Let $\mathcal T$ be the triangulation of a the torus $\mathbb T^2$ consisting of two triangles. Let $s$ be a connected simple closed curve on $\mathbb T^2$ that is not nullhomoptic. Then $s$ is isotopic to a normal curve such that for each pair $(\alpha,\alpha'),(\beta,\beta'),(\gamma,\gamma')$ of normal arcs as shown in Figure \ref{fig:torusarcs}, the number of arcs contained in the induced cell-decomposition of $s$ is of the form $(n,n),(k,k),(l,l)$ for $n,k,l\in \mathbb Z_{\geq 0}$ with at least one of $n,k,l=0$.
\end{lemma}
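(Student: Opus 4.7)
The proof has two main stages. First, I would bring $s$ to a normal representative by isotoping it so that it meets $\mathcal{T}^{(1)}$ transversely in a minimal number of points. A standard bigon-removal argument (in the spirit of the normalization behind Theorem \ref{haken}) together with the hypothesis that $s$ is connected and non-nullhomotopic rules out both inessential arcs in the triangles and closed arc components inside a single triangle, so the resulting curve is normal.

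Next I set up the counting. Let $T_1,T_2$ denote the two triangles of $\mathcal{T}$ and $e_1,e_2,e_3$ the three edges, each of which appears exactly once as a side of $T_1$ and once as a side of $T_2$. Label the normal arc types in $T_1$ by $\alpha$, $\beta$, $\gamma$, so that $\alpha$ avoids $e_1$, $\beta$ avoids $e_2$, and $\gamma$ avoids $e_3$, and use primes for the analogous arcs in $T_2$; let $a,b,c$ and $a',b',c'$ be the corresponding multiplicities in the induced cell-decomposition of $s$. Since $s$ is embedded and each $e_i$ lies on the boundary of both triangles, every intersection point of $s$ with $e_i$ is shared as an endpoint by exactly one arc in $T_1$ and one arc in $T_2$, which yields
\[ b+c = b'+c', \quad a+c = a'+c', \quad a+b = a'+b'. \]
Pairwise subtractions give $a-a' = b-b' = c-c'$, and summing the three equations forces this common value to be zero, so $a=a'$, $b=b'$, $c=c'$. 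This produces the paired equalities $(n,n)$, $(k,k)$, $(l,l)$ appearing in the statement.

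Finally, within a single triangle the three normal arc types pairwise separate distinct vertices, so any two drawn simultaneously must cross. Because $s$ is simple, the arcs in $T_1$ are pairwise disjoint, so at most two of $a,b,c$ can be positive; hence at least one of $n,k,l$ vanishes, and the same vanishing coordinate in $T_2$ follows from the equalities established above. The main obstacle is the initial normalization step, but this is routine: non-nullhomotopy of $s$ and the connectedness hypothesis together rule out the innermost discs, bigons, and trivial closed components that would otherwise obstruct reducing $s$ to a normal curve. Once $s$ is normal, the conclusion of the lemma follows from the elementary linear algebra and the crossing observation above.
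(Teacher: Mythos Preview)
The paper does not supply its own proof of this lemma; it simply cites Theorem~3.6 of Jaco--Sedgwick. So there is nothing in the paper to compare against, and your proposal must stand on its own.

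Your first two stages are fine: the normalization is standard, and the edge-matching computation correctly forces $a=a'$, $b=b'$, $c=c'$. The problem is the final step. You assert that in a single triangle ``any two [normal arc types] drawn simultaneously must cross,'' and conclude that at most two of $a,b,c$ can be positive. This is false. An arc of type $\alpha$ cutting off the corner $v_1$ and an arc of type $\beta$ cutting off the corner $v_2$ can certainly be drawn disjointly---just push each close to its own vertex. Indeed, all three arc types can appear disjointly in the same triangle: one arc near each corner, leaving a central hexagonal region. So embeddedness of $s$ does not by itself rule out $a,b,c>0$.

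The correct argument uses the vertex link. If $a,b,c>0$, take in each triangle the innermost arc of each type (the one closest to its corner). Matching endpoints across the three edges, these six arcs assemble into a single closed normal curve---precisely the link of the unique vertex of $\mathcal T$---which bounds a small disc about that vertex and is therefore null-homotopic. Hence either $s$ equals this vertex-linking curve (contradicting non-nullhomotopy) or $s$ has it as a proper component (contradicting connectedness). Either way $\min(a,b,c)=0$. Replace your crossing claim with this vertex-link argument and the proof goes through.
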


\begin{figure}
	\centering
	\includegraphics[width=2in]{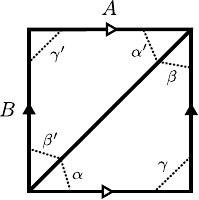}
	\caption{Normal arc classes on the minimal vertex triangulation of the torus.}
	\label{fig:torusarcs}
\end{figure}

For a normal surface $S$ in $\mathcal T$ an inflation triangulation, let $x_\alpha,y_\alpha$be represent the number of normal triangles and normal quadrilaterals respectively in the induced cell-decomposition of $S$, where $\alpha$ is a normal arc in the material boundary of $\mathcal T$ as per Figure \ref{fig:torusarcs}. Likewise define $x_{\alpha'},y_{\alpha'},x_{\beta},y_\beta,\dots$. By Lemma \ref{lem:arc} we have a system of linear equations that further restricts the matching equations:
\begin{equation}\label{toruseq}
	\begin{split}
		x_{\alpha}+y_{\alpha} & = x_{\alpha'}+y_{\alpha'} \\
		x_{\beta}+y_{\beta} & = x_{\beta'}+y_{\beta'} \\
		x_{\gamma}+y_{\gamma} & = x_{\gamma'}+y_{\gamma'}
	\end{split}
\end{equation}

We refer to Equations \ref{toruseq} as the \textit{boundary equations}. Lemma \ref{lem:arc} further implies that if $S$ is incompressible and $\partial$-incompressible then at least one of the above equations is identically zero.

For an inflation triangulation $\mathcal T$ with boundary-linking surface $B$, each band tetrahedron $b_i$ is characterized by containing a single normal quadrilateral disc in the cell decomposition of $B$. Similarly, a crossing tetrahedron $c_j$ contains two normal quadrilaterals from $B$. In each of $b_i$ and $c_j$ we call the remaining two normal quadrilateral types \textit{incompatible}. If a normal surface $S$ in $\mathcal T$ contains no incompatible normal quadrilaterals in its induced cell-decomposition, then we say $S$ is \textit{compatible}. 

\begin{corollary}\label{cor:onequad}
	Let $\mathcal T$ be a short inflation such that the material boundary of $\mathcal T$ is homeomorphic to a torus. Let $S$ be a normal surface with non-empty boundary in $\mathcal T$ such that no component of $\partial S$ bounds a disc in the material boundary of $\mathcal T$. If $S$ has no incompatible normal quadrilaterals in one band tetrahedron of $\mathcal T$, then $S$ is compatible.
\end{corollary}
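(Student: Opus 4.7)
The plan is to argue by contrapositive, using the boundary equations (\ref{toruseq}) together with Lemma \ref{lem:arc}. By Lemma \ref{shorty} and the hypothesis that $\mathcal T$ is a short inflation with torus material boundary, $\mathcal T$ has exactly two band tetrahedra $b_1, b_2$ and no crossing tetrahedra; the two branch-point tetrahedra $p_1, p_2$ contribute the two unglued faces that form the minimal-vertex triangulation of $\partial M$. Assume $S$ has an incompatible normal quadrilateral in $b_2$; the goal is to conclude that $S$ must also have an incompatible quadrilateral in $b_1$, or else some component of $\partial S$ bounds a disc on $\partial M$.

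First, I will determine which normal arc class on $\partial M$ (in the notation of Figure \ref{fig:torusarcs}) is produced by each normal disc of $S$ lying in $p_1, p_2$. Since the arcs on the non-material faces of $p_1, p_2$ must match, via the gluings across the trapezoidal faces of $\mathcal C_X$, with the arcs coming from the adjacent band tetrahedra, the arc class on the material boundary induced by a given disc in $p_j$ is determined by which quadrilateral type is chosen in the adjacent $b_i$. The key combinatorial observation (established by inspecting the trapezoids common to band and branch-point tetrahedra) is that the compatible quadrilateral types in $b_1, b_2$, together with every normal triangle of $S$, only produce arcs in two of the three arc pairs on $\partial M$; each incompatible quadrilateral type in $b_i$ forces a nonzero contribution to the remaining pair.

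Second, I will impose the boundary equations (\ref{toruseq}) and exploit this dichotomy. If $b_1$ has only the compatible quadrilateral (or zero quadrilaterals) while $b_2$ contains an incompatible quadrilateral, then the arc contribution from $b_2$'s incompatible quadrilateral to the ``third'' arc pair cannot be canceled by any quadrilateral in $b_1$. At this point I will verify that the resulting unbalanced contribution to (\ref{toruseq}) forces every one of the three arc pairs $(\alpha,\alpha')$, $(\beta,\beta')$, $(\gamma,\gamma')$ on $\partial M$ to have nonzero total count. By Lemma \ref{lem:arc}, this is incompatible with $\partial S$ consisting of normal simple closed curves none of which is nullhomotopic, so some component of $\partial S$ must bound a disc, contradicting the hypothesis.

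The main obstacle is the bookkeeping in the first step: tracking precisely how each of the three quadrilateral types in each $b_i$ propagates across the trapezoidal identifications with $p_1, p_2$ to a specific arc class on the material boundary. Because Lemma \ref{shorty} produces a short inflation via either the two-branch or three-branch frame (followed by a site-swap in the latter), the adjacency pattern between $b_1, b_2$ and $p_1, p_2$ differs in the two cases, and I expect to handle them in parallel, relying on the fact that each $b_i$ contributes a single truncated prism with a distinguished pair of hexagonal faces glued to the ideal tetrahedra of $\widetilde{\mathbf{\Delta}}^*$ (which carry no arcs on $\partial M$) and three trapezoidal faces glued through $p_1, p_2$ to the material boundary.
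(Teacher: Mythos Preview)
Your plan is essentially the paper's argument: both rest on Lemma~\ref{lem:arc} together with the observation that in a short inflation the normal discs of $S$ in the band tetrahedra, combined with the normal arc pattern of $\partial S$, determine the discs in the branch-point tetrahedra, so the constraint that at most two of the three arc-type pairs appear propagates between the two band tetrahedra. Two minor bookkeeping corrections: a truncated prism has two trapezoidal faces, not three, so each $b_i$ meets the square pyramid along exactly two of the faces $\sigma_1,\ldots,\sigma_4$; and your concern about handling the two-branch and three-branch frames separately is unnecessary, since once $\mathcal T$ is short the local configuration near $\partial M$ (two band tetrahedra and two branch-point tetrahedra forming a square pyramid) is fixed regardless of which route Lemma~\ref{shorty} took to produce it.
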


\begin{proof}
	For a short inflation, the normal discs of a surface $S$ in the band tetrahedra together with the normal arcs of $\partial S$ fully determine the normal discs of $S$ in the branch-point tetrahedra. The result then follows immediately from the restriction that no component of $\partial S$ bounds a disc in the material boundary of $\mathcal T$ together with Lemma \ref{lem:arc}. 
\end{proof}

We now come to the main technical results of this work: the construction of a short inflation in which a surface is compatible. It is compatibility that will ensure the existence of a spun-normal surface upon crushing the boundary of $\mathcal T$.

\begin{theorem}\label{shortquads}
	Let $M$ be an orientable, compact, irreducible, $\partial$-irreducible 3-manifold with non-empty connected boundary homeomorphic to a torus. Let $S$ be a connected, properly embedded, incompressible, $\partial$-incompressible surface in $M$ with $\partial S$ connected. There exists a short inflation triangulation $\mathcal T$ of $M$ in which $S$ is isotopic to a compatible normal surface. Further, there is an algorithm to construct such a $\mathcal T$.
\end{theorem}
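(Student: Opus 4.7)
The plan is to begin with an arbitrary short inflation produced by Lemma \ref{shorty}, realize $S$ as a normal surface via Haken's Theorem \ref{haken}, and then iteratively modify the triangulation by Pachner moves, tracking $S$ through Remark \ref{23normal}, until its induced normal form becomes compatible. Corollary \ref{cor:onequad} reduces the global compatibility check to a single distinguished band tetrahedron $b_0$, and Lemma \ref{lem:arc} forces $\partial S$ into matched-pair boundary arcs $(n,n),(k,k),(l,l)$ with at least one count equal to zero, providing the rigidity needed to drive the reduction.

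Concretely, I would first invoke Lemma \ref{shorty} to obtain a short inflation $\mathcal T_0$ of $M$ and Theorem \ref{haken} to isotope $S$ to a normal representative $S_0$ in $\mathcal T_0$. Fix one of the two band tetrahedra, call it $b_0$, and let $Q$ denote the normal quadrilateral type in $b_0$ carried by the boundary-linking surface. Since $S_0$ is admissible, its quadrilaterals in $b_0$ are of a single type; if that type is $Q$, Corollary \ref{cor:onequad} finishes the proof immediately.

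If not, I would use two families of reshuffle moves. A \emph{boundary reshuffle} is a 2-2 move on the square pyramid formed by the two branch-point tetrahedra along their shared edge in the material torus boundary; this flips the diagonal of the two-triangle triangulation of $\partial M$, relabels the boundary arc classes of Lemma \ref{lem:arc}, and simultaneously changes which quadrilateral type in each band tetrahedron is deemed compatible. An \emph{interior reshuffle} is a 3-2 move inverting a shortening move of Proposition \ref{shortin} followed by a different shortening 2-3 move, chosen so that the band quadrilateral at $b_0$ is replaced by the compatible type. Each reshuffle preserves the short-inflation property by Proposition \ref{shortin} together with Theorem \ref{FTC}, preserves the isotopy class of $S$ by Remark \ref{23normal}, and has a predictable effect on quadrilateral counts dictated by the Q-matching equations \ref{eq:qmatch} and the boundary equations \ref{toruseq}.

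The main obstacle is termination and the accompanying exhaustive case analysis. My approach would be to define a lexicographic complexity consisting of the number of incompatible quadrilaterals of $S_0$ in $b_0$ together with the total weight $wt(S_0)$, and to verify through a finite case check---enumerating the admissible local configurations of $S_0\cap b_0$ together with the boundary-arc data in the two material triangles---that at each step at least one of the two reshuffles strictly decreases this invariant. Because the complexity is a non-negative integer and the number of local configurations is finite, the procedure terminates at a short inflation in which $S$ is compatible; the recorded sequence of moves is the promised algorithm.
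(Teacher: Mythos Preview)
Your overall strategy---start from a short inflation, normalize $S$, then adjust by Pachner moves until compatibility holds---matches the paper's, but the specific moves you propose do not work, and this is where the argument breaks down.

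First, your \emph{interior reshuffle} is ill-defined in a short inflation. Shortening moves of Proposition~\ref{shortin} are 2-3 moves at a trapezoidal face shared by two band tetrahedra or by a band and a crossing tetrahedron. In a short inflation there are no crossing tetrahedra, and the two band tetrahedra lie in distinct branches, so they share no trapezoidal face with one another; there is simply no shortening move available to invert via a 3-2 move. Second, your \emph{boundary reshuffle} (the 2-2 move on the branch-point square pyramid) does not do what you claim: it changes the diagonal of the material boundary and relabels the arc classes, but it leaves the band tetrahedra untouched, so the distinguished compatible quadrilateral type in $b_0$---the one separating the unique boundary edge of $b_0$ from its opposite edge---is unchanged. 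Thus neither move alters the compatibility status of $S$ in $b_0$, and your lexicographic complexity need not decrease.

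The missing ingredient is the \emph{site-swapping move}: a 2-3 move at the trapezoidal face shared by a band tetrahedron and a branch-point tetrahedron. This temporarily lengthens the inflation (producing a new branch configuration), after which one re-shortens. The paper carries out an explicit finite case analysis on the normal discs of $S$ in the square pyramid $P$ formed by the two branch-point tetrahedra---five cases governed by which of $y_\alpha,y_{\alpha'},y_\gamma,y_{\gamma'}$ are positive---and shows that at most two site-swaps (at faces labelled $\sigma_1,\dots,\sigma_4$ of $P$), each followed by shortening, suffice to make $S$ compatible. No descent on a weight invariant is needed; the argument terminates in a bounded number of steps determined directly by the boundary-arc data of $\partial S$.
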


\begin{proof}
	Let $M$ and $S$ be given as in the supposition and let $\mathcal T$ be a short inflation constructed as in Lemma \ref{shorty}. By Theorem \ref{haken} $S$ is isotopic to a normal surface that we again refer to as $S$. If $S$ is compatible, then we are done. Suppose instead that the cell-decomposition of $S$ has an incompatible normal quadrilateral.
	
	The material boundary of $\mathcal T$ is given by the two unglued faces of the branch-point tetrahedra $p_1$ and $p_2$. There are three edges in the material boundary of $\mathcal T$, one with order 2 and the others having order 3. Orient and label the two order 3 edges $A,B$ as shown in Figure \ref{fig:pyramidarcs}. The fundamental group $\pi_1(\partial M)$ is generated by the closed curves $A$ and $B$. The boundary curve $\partial S$ can be given an orientation so that $\partial S = n A + mB$, with $\gcd(m,n)=1$. As in Lemma \ref{lem:arc} and Figure \ref{fig:torusarcs}, we refer to the six normal arc types in the material boundary of $\mathcal T$ as $\alpha,\alpha',\beta,\beta',\gamma,\gamma'$. Let $x_\alpha,y_\alpha$ count the number of normal triangles and normal quadrilaterals respectively in the cell-decomposition of $S$ with normal arc type $\alpha$; similarly, for the other normal arc types define such $x_{\alpha'},y_{\alpha'}\dots$ Note that as $S$ is $\partial$-irreducible, it cannot be the case that both $x_\gamma$ and $x_{\gamma'}$ are positive. The branch-point tetrahedra have a single face $\sigma$ in common with the lifts $\tilde p_1,\tilde p_2$ meeting at $\sigma$ forming a square pyramid $P$ with base the unglued faces of $\tilde p_1,\tilde p_2$. Up to reflective symmetry of the material boundary of $\mathcal T$ and relabeling the edges and possibly performing a 2-2 move on $P$, it can be assumed $0\leq m\leq n$. Thus we need only to prove the result when $\partial S$ has normal arc types of $\alpha,\alpha',\gamma,\gamma'$. That is, $x_{\beta},x_{\beta'},y_{\beta},y_{\beta'}=0$.	
	\begin{figure}
		\includegraphics[width=2in]{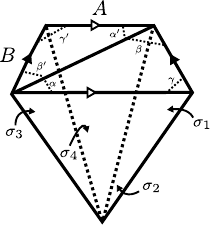}\caption{The two branch-point tetrahedra, the order 3 edges $A,B$ are labeled. On the unglued faces, the normal arc types are labeled. The faces glued to band tetrahedra are labeled in order $\sigma_1,\sigma_2,\sigma_3,\sigma_4$.}\label{fig:pyramidarcs}
	\end{figure}
	
	In the short inflation $\mathcal T$, the normal discs of $S$ in the band tetrahedra are completely determined by the normal discs in the branch-point tetrahedra. The proof then proceeds by cases according to which normal quadrilaterals (if any) are present in the branch-point tetrahedra. Figure \ref{fig:pyramidarcs} illustrates the square pyramid with normal arc types $\alpha,\alpha',\gamma,\gamma'$ and labeled faces $\sigma_1,\sigma_2,\sigma_3,\sigma_4$ at which site-swapping moves will be performed.
	
	\vspace{1em}
	
	\begin{mycase}
		\case \textbf{(no normal quadrilaterals in $P$)} 
		If $x_\alpha > 0$, then under the assumption that there are no normal quadrilaterals in $P$ we must have $x_{\beta'}=x_\alpha$. Thus we have exactly $x_\gamma = x_{\gamma'} = 1$ and each band tetrahedra has exactly one incompatible normal quadrilateral. After a site-swap at $\sigma_1$ the resulting inflation triangulation has one branch containing two band tetrahedra and another branch containing one band tetrahedron. The latter contains no incompatible quadrilaterals, thus after shortening the length 2 branch $S$ is compatible by Corollary \ref{cor:onequad}. Case 1 is then complete.
		
		\case \textbf{($y_\gamma > 0$ and  $y_{\alpha'}=0$)}
		As $y_{\alpha'} = 0$, it must be that $x_\alpha = x_{\beta'}$. Thus $x_\alpha = 0$ and by Equation \ref{toruseq} $x_{\alpha'}=0$. Further, since $\partial S$ is connected and $S$ is $\partial$-irreducible $x_\gamma = y_{\gamma'}=0$. In particular, $y_\gamma = x_{\gamma'} = 1$. This determines that there are no incompatible normal quadrilaterals in either band tetrahedron and Case 2 is done.
		
		\case \textbf{($y_{\alpha'}>0$, $y_\alpha = 0$, and $y_\gamma \geq 0$)}
		The gluing equation of the square pyramid and $y_\alpha =0$ implies that $x_{\alpha'}=0$. Then $x_\alpha = y_{\alpha'}$ and $x_{\gamma'} =  x_\gamma + y_\gamma$. If $x_\gamma = 0$ then $S$ is compatible and Case 3 is done. If $x_\gamma > 0$, then there are $x_\gamma$ incompatible normal quadrilaterals in the band tetrahedron glued to the square pyramid along faces $\sigma_1,\sigma_3$. This normal quadrilateral type is such that the normal arcs in $\sigma_1,\sigma_3$ match with normal arcs from the normal triangles having arcs in the material boundary of $\mathcal T$ of $\gamma$ and $\gamma'$ respectively. After a site-swap at $\sigma_1$, the resulting inflation has two branches one of which contains no incompatible normal quadrilaterals. Hence by Corollary \ref{cor:onequad}, after shortening the inflation $S$ is compatible. Thus completing Case 3.
		
		\case \textbf{(Analogous Cases)}
		If $y_\gamma' > 0$ and $y_{\alpha}  = 0$, then a site-swap at $\sigma_2$ produces analogous effects as the site-swap at $\sigma_1$ in Case 2. Similarly, if $y_\alpha >0, y_{\alpha'}=0, $ and $y_{\gamma'}\geq 0$. Then a site-swap at $\sigma_2$ is analogous to Case 3.
		
		\case \textbf{($y_\alpha,y_{\alpha'}>0$)}
		Recall that $\partial S$ is written as $\partial S = nA + mB$ where $A,B$ are the labeled edges in Figure \ref{fig:pyramidarcs}. Since $y_\alpha,y_{\alpha'}>0$, $n \geq 2$. Then as $\partial S$ is connected $m>0$. The only normal discs in the square pyramid $P$ that are incident to the edge $A$ are those normal triangles counted by $x_\gamma$ and $x_{\gamma'}$, hence $x_\gamma = x_{\gamma'} = m$. Let $r = y_{\alpha}=x_{\alpha'}$, then $x_\alpha =y_{\alpha'}= n - m - r$. We can then determine the normal discs in the band tetrahedron glued to $P$ at $\sigma_1$ and $\sigma_2$. Namely, there are $m$ incompatible normal quadrilaterals, $r$ normal triangles matched with the $y_{\alpha}$ normal quadrilaterals and $n-m-r$ normal triangles matching the $y_{\alpha'}$ normal quadrilaterals at $\sigma_1,\sigma_2$ respectively.
		
		Consider the square pyramid $P'$ formed by the branch-point tetrahedra in $\mathcal T'$, the inflation triangulation obtained after a site-swap at $\sigma_1$ in $\mathcal T$. After appropriately labeling the order 3 edges and normal arcs in $P'$ as above and after a reflection about the diagonal edge of the square base, the normal discs in the resulting normal surface $S'$ associated to $S$ after the site-swap are counted as follows: \begin{align*}
			x_\gamma  = r  &,\,	y_\gamma = n-m-r\\
			x_\alpha = m &,\, y_\alpha = 0 \\
			x_{\alpha'} = 0 &,\, y_{\alpha'} = m \\
			x_{\gamma'} = n-m  &,\, y_{\gamma'} = 0.
		\end{align*}After shortening the inflation $\mathcal T'$ Case 5 then reduces to Case 3 above (shortening moves do not affect the normal discs in the branch-point tetrahedra). The face of the initial square pyramid $P$ at which the second site-swap (per Case 3) is to be performed is $\sigma_4$. This completes Case 5.
		
	\end{mycase}
	
	\vspace{1em}
	
	Therefore, if $S$ a connected, properly embedded, incompressible, $\partial$-incompressible surface in $M$ with $\partial S$ connected, then the short inflation $\mathcal T$ can be altered through successive site-swapping moves and shortening moves into a short inflation in which $S$ is isotopic to a compatible normal surface.
\end{proof}

For a given $\mathcal T$ and $S$, implementation of Theorem \ref{shortquads} requires first determining the normal discs in the branch-point and band tetrahedra of $\mathcal T$. Then the appropriate succession of site-swaps and shortening moves can be chosen (determined by the cases of Theorem \ref{shortquads}) with at most two site-swaps required. The effect of each site-swap on the normal discs is illustrated in Figure \ref{fig:bmoves}. With compatibility of $S$ ensured, we now prove the relation to a spun-normal surface in $\mathring M$ isotopic to the induced embedding of $\mathring S$.

\begin{figure}
	\includegraphics[width=4in]{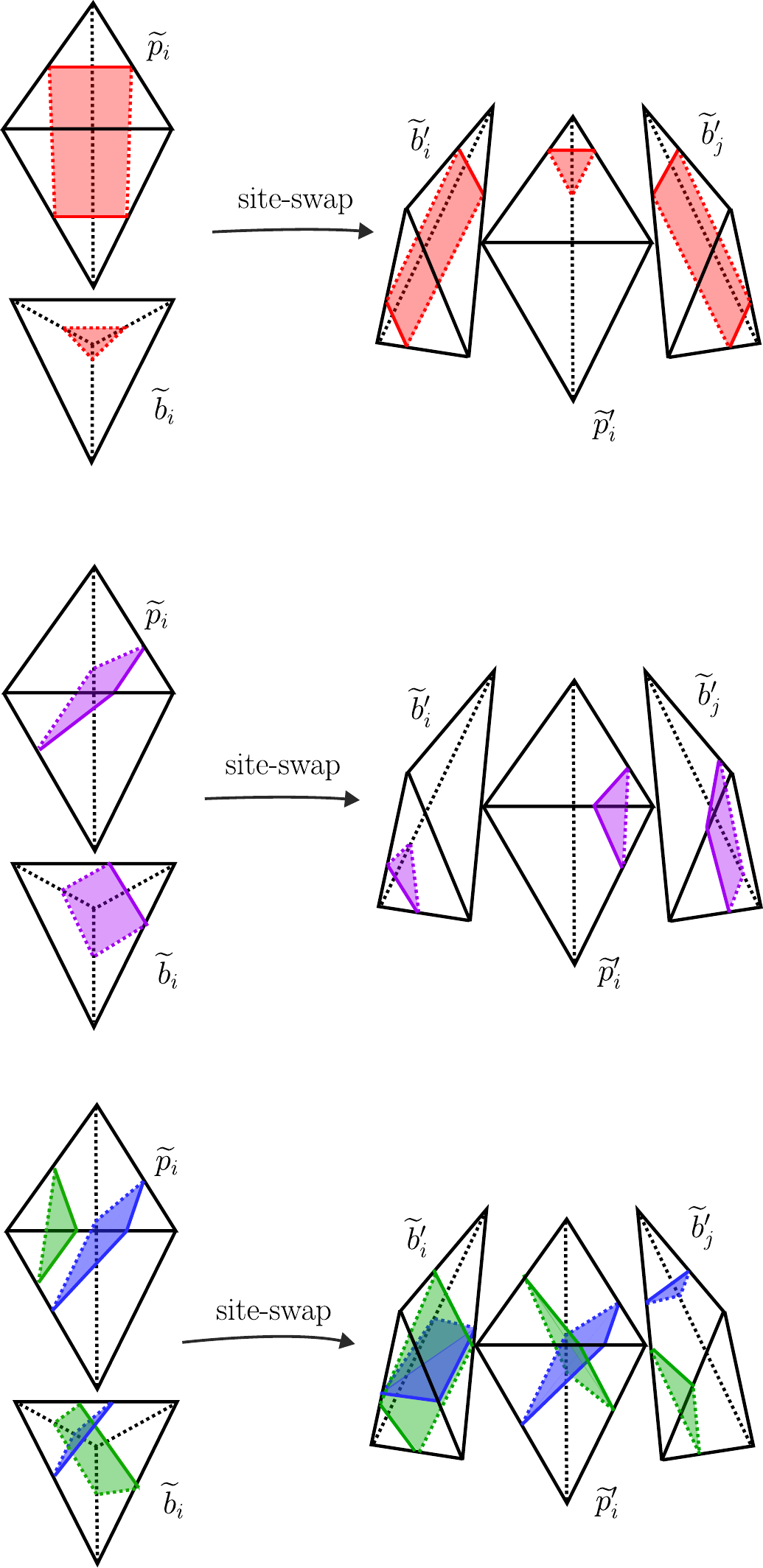}\caption{The effect of a site-swap for four possible intersections of a surface $S$ meeting at a common face of a band tetrahedron and a branch-point tetrahedron. Absent are the cases of $\widetilde b_i$ containing compatible normal quadrilaterals.}\label{fig:bmoves}
\end{figure}

\begin{theorem}\label{crushsurface}
	Let $M$ be an orientable, compact, irreducible, $\partial$-irreducible 3-manifold with non-empty connected boundary homeomorphic to a torus. Let $\mathcal T$ be a short inflation with $S$ a compatible normal surface. The ideal triangulation $\mathcal T^*$ obtained by crushing $\mathcal T$ along $\partial M$ has a spun-normal surface $S^*$ with normal quadrilaterals completely determined by the normal quadrilaterals of $S$ in $\mathcal T$, the lifts of which (relative to the crushing map) are contained in $\widetilde{\mathbf{\Delta}}^*\subset\widetilde{\mathbf{\Delta}}$. Further, $S^*$ represents the interior of a properly embedded surface in $\mathring M$ isotopic to $\mathring S$.
\end{theorem}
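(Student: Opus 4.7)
The plan is to define the quadrilateral content of the candidate spun-normal surface $S^*$ directly from $S$, verify that it satisfies the $Q$-matching equations (\ref{eq:qmatch}) for $\mathcal T^*$ so that Kang's parametrization upgrades it to a bona fide spun-normal surface, and then identify $\mathring S$ with $S^*$ via an explicit cell-level correspondence. First I would note that any normal quadrilateral of $S$ in a tetrahedron $\widetilde\Delta^*_i\in\widetilde{\mathbf{\Delta}}^*$ lies entirely within the truncated tetrahedron of $\mathcal C_X$, since such a quadrilateral separates opposite edges of $\widetilde\Delta^*_i$ and therefore cannot be contained in any vertex cap cut off by the boundary-linking surface $B$. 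Define $S^*_Q\in\mathbb R^{3|\widetilde{\mathbf{\Delta}}^*|}_{\geq 0}$ by recording, for each $\widetilde\Delta^*_i$, the number of each normal quadrilateral type appearing in $S\cap\widetilde\Delta^*_i$; since the tetrahedra of $\widetilde{\mathbf{\Delta}}^*$ are precisely the lifts of the ideal tetrahedra of $\mathcal T^*$, this is a candidate quadrilateral vector in $\mathcal T^*$.

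The main work is to verify that $S^*_Q$ satisfies the $Q$-matching equations. The edges of $\mathcal T^*$ are in bijection with the connected components of $\mathbb P(\mathcal C_X)$, and the contribution of $S^*$ to the $Q$-matching equation at an edge $e^*$ of $\mathcal T^*$ consists of the signed corners of quadrilaterals of $S$ in $\widetilde{\mathbf{\Delta}}^*$ that are incident to edges of $\mathcal T$ identified into the component crushing to $e^*$. Summing the $\mathcal T$-$Q$-matching equations for $S$ over these edges produces the $\mathcal T^*$-$Q$-matching equation at $e^*$, together with correction terms coming from quadrilaterals of $S$ inside the band and branch-point tetrahedra along that component. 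By compatibility, every quadrilateral of $S$ in a band tetrahedron is parallel to the unique quadrilateral of $B$ there, so the net correction along $e^*$ from band tetrahedra is a non-negative integer multiple of the corresponding contribution of $B$, which vanishes since $B$ is itself a normal surface. The correction terms from branch-point tetrahedra then cancel after applying the boundary equations (\ref{toruseq}), which hold for $\partial S$ by Lemma \ref{lem:arc}. Kang's extension of Tollefson's theorem to spun-normal surfaces now produces a unique spun-normal surface $S^*$ in $\mathcal T^*$ with quadrilateral vector $S^*_Q$.

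Finally, to realize the isotopy $\mathring S\simeq S^*$ in $\mathring M$, I would truncate $\mathcal T^*$ at the ideal vertex to recover, via the inverse of the crushing map, the cell-decomposition of $\bar M := M\setminus N(\partial M)$ by the truncated tetrahedra of $\mathcal C_X$; a core of $S^*$ is then the cell-decomposition induced by $S^*$ on this truncation. By construction the normal quadrilaterals of $S^*$ agree with those of $S$ on each $\widetilde\Delta^*_i$, and the accompanying normal arcs on the hexagonal faces of the truncated tetrahedra are determined by the quadrilateral data together with the arcs of $\partial S$; the resulting pattern of quadrilaterals, triangles, truncated triangles, and truncated quadrilaterals on $S\cap\bar M$ coincides with a core of $S^*$ up to normal isotopy. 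The remaining piece $\mathring S\cap N(\partial M)$ is a half-open annular collar of $\partial S$, normally isotopic to the infinite trivial normal annulus defining the spun end of $S^*$; gluing these isotopies along $\partial\bar M$ produces the desired isotopy.

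The principal obstacle is the signed bookkeeping in the second step: one must track, for each crushed component of $\mathbb P(\mathcal C_X)$, the contribution of every corner of every quadrilateral of $S$ inside each band and branch-point tetrahedron that is identified into that component. Once compatibility of $S$ is invoked, however, the non-trivial quadrilateral contributions align with those of $B$ and the required cancellations follow from the matching equations for $B$ together with the boundary equations for $\partial S$.
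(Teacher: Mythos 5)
Your overall strategy matches the paper's: form $v^*_Q$ by restricting $S$'s quadrilateral vector to $\widetilde{\mathbf{\Delta}}^*$, verify that it satisfies the $Q$-matching equations (\ref{eq:qmatch}) for $\mathcal T^*$, invoke Kang and Tollefson to produce a unique spun-normal $S^*$, and then establish the isotopy by passing to the truncated-tetrahedra decomposition and applying cell-like map theory. Your first and third steps are essentially what the paper does (the paper cites Armentrout and Siebenmann for the cell-like map argument and deals with one positioning technicality by normal isotopy of $S$ and $B$, but these are details). The gap is in your second step, which replaces the paper's direct computation with two claims that do not hold up.

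The paper establishes the $Q$-matching equations for $v^*_Q$ by the same ``sum the $\mathcal T$-matching equations over the lifts $\{e_k\}$ of an ideal edge $e^*$'' framework you use (Equation \ref{eq:totalsense}), but then shows the band and branch-point contributions vanish by a \emph{local sense-cancellation}: in a branch-point tetrahedron $\widetilde p_i$, the three interior edges are either all lifts of $e^*$ or none are, and for any quadrilateral type the signs at the corners on these three edges cancel in pairs; in a band tetrahedron, the compatible type has sense $0$ with respect to the single material-boundary edge $\langle 23\rangle$, and the remaining four edges are lifts of $e^*$ all together with cancelling signs. This is a pointwise identity that requires only the edge-identification structure of the inflation together with compatibility (to kill the incompatible band quad types). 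Your proposal instead claims (i) that the band contribution is a nonnegative multiple of $B$'s contribution, which then vanishes because $B$ is normal; but $B$'s $Q$-matching equation, summed over $\{e_k\}$, only constrains the \emph{combined} contribution of both band tetrahedra, and since $S$ may carry different quadrilateral multiplicities in the two band tetrahedra (unlike $B$, which has multiplicity one in each), proportionality to $B$'s contribution does not follow. And (ii) that the branch-point contribution cancels by the boundary equations (\ref{toruseq}); but those equations count normal arcs on the material boundary and say nothing about the signed senses of quadrilaterals at interior edges, so they cannot produce the required cancellation. The branch-point cancellation is in fact unconditional and must be established by the direct sense computation, not by boundary-arc bookkeeping.
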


\begin{proof}
	Let the 3-manifold $M$, short inflation $\mathcal T = (\widetilde{\mathbf \Delta},\mathbf{\Phi})$, and compatible normal surface $S$ be given as above. Let $\mathcal T^*$ be the ideal triangulation obtained by crushing $\mathcal T$ along $\partial M$. Let $t = |\widetilde{\mathbf{\Delta}}|$. Recall that we may write $\widetilde{\mathbf \Delta}=\widetilde{\mathbf \Delta}^* \sqcup \{\widetilde b_1,\widetilde b_2\} \sqcup \{\widetilde p_1,\widetilde p_2\}$ where $\widetilde b_i$ are band tetrahedra and $\widetilde p_i$ are branch-point tetrahedra in the short inflation. Further, we have $\mathcal T^* = (\widetilde{\mathbf \Delta}^*,\mathbf{\Phi}^*)$ described by a one-to-one correspondence between the ideal tetrahedra of $\mathcal T^*$ and those in $\widetilde{\mathbf{\Delta}}^* \subset \widetilde{\mathbf \Delta}$ and $\mathbf{\Phi}^*$ induced by the crushing map. Through this one-to-one correspondence we associate the normal quadrilateral type $d^Q_i$ in $\widetilde{\Delta}_j\in\widetilde{\mathbf{\Delta}}^* \subset \widetilde{\mathbf \Delta}$ to the normal quadrilateral type $d^{Q*}_i$ in the ideal tetrahedron $\widetilde{\Delta}^*_j \subset \widetilde{\mathbf{\Delta}}^*$ of $\mathcal T^*$. From the vector $S_Q = (q_1,q_2,\dots, q_{3t})\in \mathbb R^{3t}_{\geq 0}$ representing $S$ in $\mathcal T$ with $d^Q_i$ representing a normal quadrilateral type in some $\widetilde b_1,\widetilde b_2,\widetilde p_1,\widetilde p_2$ for $i>3t-12$. We can then construct a vector $v^*_Q = (q^*_1,q^*_2,\dots, q^*_{3t-12})\in \mathbb R^{3t-12}_{\geq 0}$ with $q_i^* = q_i$ for $1\leq i \leq 3t-12$. We wish to show that $v^*_Q$ represents an admissible spun-normal surface in $\mathcal T^*$.
	
	Let $e^*$ be an edge in $\mathcal T^*$ and let $\{e_k\}_k$ be the collection of edges in $\mathcal T$ that are lifts of $e^*$ through the crushing map. If $E$ is a normal quadrilateral in some tetrahedron $\widetilde{\Delta}\in\widetilde{\mathbf{\Delta}}^*$, then the sum $\sum_k \epsilon_k$ of the senses of $E$ with respect to each $e_k$ is equal to the sense $\epsilon^*$ of the corresponding normal quadrilateral $E^*$ in $\mathcal T^*$ with respect to $e^*$. Consider the following equation that equates to zero by Equation \ref{eq:qmatch}: \begin{equation}\label{eq:totalsense}
		\sum_{i=1}^{3t} \sum_k \epsilon_{k,i} q_i=0.
	\end{equation}
	
	If each tetrahedron $\Delta$ in $\mathcal T$ with at least one edge identified to some $e_k$ lifts to a tetrahedron $\widetilde{\Delta}\in \widetilde{\mathbf{\Delta}}^*$, then $\{e_k\}_k$ is in fact a singleton and the sense for every normal quadrilateral in band and branch-point tetrahedra is zero. 
	
	There are three edges of a branch-point tetrahedron $\widetilde p_i$ which are lifts of interior edges of $\mathcal T$; if one of these three edges is in $\{e_k\}_k$, then all three are included say $e_1,e_2,e_3$. For any normal quadrilateral type in $\widetilde p_i$ we have the sum of the senses given by $\sum_{k=1}^3 \epsilon_k = 0$. 
	
	In a band tetrahedron $b_i$ there is exactly one edge, say $\langle 01 \rangle$, in the material boundary of $\mathcal T$. Let $d^Q_1$ represent the normal quadrilateral type that separates the edges $\langle 01 \rangle$ and  $\langle 23 \rangle$; let $d^Q_2,d^Q_3$ be the remaining two types.  As $S$ is compatible, the number of normal quadrilaterals in $S$ of types $d^Q_2,d^Q_3$ are $q_2=q_3=0$. The sign of a normal quadrilateral of type $d^Q_1$ with respect to the edge $\langle 23 \rangle$ is $\delta_1 =0$. If one of the (unoriented) edges $\langle 02 \rangle,\langle 03 \rangle,\langle 12 \rangle,\langle 13 \rangle$ of $\widetilde b_i$ are in $\{e_k\}_k$, then all for must be in $\{e_k\}_k$. Denote these edges by $e_1,e_2,e_3,e_4$ respectively (they must all be distinct by the inflation construction). Then for each type $d^Q_i \in \{d^Q_1,d^Q_2,d^Q_3\}$ we have $\sum_{k} \epsilon_{k,i}q_i = 0$. See Figure \ref{fig:edgesigns} for an illustration of the signs that contribute to these sense calculations.
	
	\begin{figure}
		\includegraphics[width=5.65in]{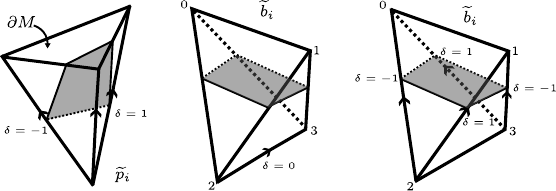}\caption{Left: the signs of a quadrilateral in $\widetilde p_i$ with respect to lifts of oriented edges $e_1,e_2,e_3$. Right: the signs for a compatible quadrilateral when $\langle 23 \rangle$ or $\langle 20 \rangle,\langle 21 \rangle,\langle 30 \rangle,\langle 31 \rangle$ are in the lift of $e$.}\label{fig:edgesigns}
	\end{figure}
	
	From the above and Equation \ref{eq:totalsense} we have \begin{align*}
		0 & = \sum_{i=1}^{3t} \sum_k \epsilon_{k,i} q_i \\
		& = \sum_{i=1}^{3t-12}\sum_k\epsilon_{k,i} q_i + \sum_{i=3t-11}^{3t} \sum_k\epsilon_{k,i} q_i \\
		& = \sum_{i=1}^{3t-12}\epsilon^*_{i} q^*_i
	\end{align*}Then $v^*_Q$ satisfies the Q-matching equations and represents an admissible solution. That is, there is some spun-normal surface $S^*$ in $\mathcal T^*$ with $q^*_1,q^*_2,\dots, q^*_{3t-12}$ counting the normal quadrilaterals of each type. By Tollefson and Kang \cite{TollefsonQ,Kang2005}, $S^*$ is the unique spun-normal surface represented by $v^*_Q$.
	
	Let $B$ be the boundary linking surface in $\mathcal T$. For a normal quadrilateral of $B$ in a band tetrahedron $\widetilde{b}_i$ there are two opposite edges $\varepsilon_1$ and $\varepsilon_2$ which crush to an edge in the frame with which $\mathcal T$ was inflated (see Figure \ref{fig:edgeinflate}). Through normal isotopies $B$ and $S$ can be positioned such that the intersection $\Gamma$ of $S$ with $B$ has no arcs in the normal quadrilaterals of $B$ that are isotopic to $\varepsilon_1,\varepsilon_2$. Crushing those cells of $\mathcal C_X$, the cell-decomposition of the complement of $B$ not containing the vertex of $\mathcal T$, which are not truncated-tetrahedra yields a cell-decomposition $\bar{\mathcal T}$ of $M$ consisting of truncated tetrahedra. By the same arguments of Lemma 3.4 in \cite{Bryant}, crushing these cells maps normal discs of $S$ to discs in $\bar{\mathcal T}$. As $S$ is compatible, the above Q-matching equations show that these discs glue together to form a surface $\bar S$ that is isotopic to $S$ in $M$. Specifically, crushing induces a cell-like map between $\bar S$ and $S \cap \mathcal C_X$. By \cite{armentrout, sieb} $\bar S$ and $S\cap \mathcal C_X$ are homeomorphic. By the preceding arguments, $\bar S$ is a core of a spun-normal surface, the normal quadrilaterals of which determined by those normal quadrilaterals of $S$ in $\widetilde{\mathbf{\Delta}}^*$. Hence $\bar S$ is a core of $S^*$ completing the proof.\end{proof}
	
	While in general for a band tetrahedron $\widetilde b_i$ the edge $\langle 23\rangle$ may be identified to the same edge class in $\mathcal T$ as one of (unoriented) $\langle 02 \rangle,\langle 03 \rangle,\langle 12 \rangle,\langle 13 \rangle$, the ideal triangulations used in Lemma \ref{shorty} are 0-efficient \cite{Jaco2003} and thus have no faces which are 3-folds or dunce hats \cite{jaco2020minimal}. Whence, in this work these three edge classes will be distinct.
	
	As incompatible normal quadrilaterals intersect the material boundary of $\mathcal T$, any closed normal surface must be compatible. This gives an alternate proof to Theorem 3.5 of \cite{Bryant} in the case of the inflation triangulation being a short inflation.

\begin{remark}\label{rem:curve}The identification of a compatible normal surface $S$ with $\bar S$ and $S^*$ allows for the boundary-slope of $S^*$ to be computed directly from $S$. In an inflation $\mathcal T,$ the edges of the square pyramid $A,B$ generate $\pi_1(\partial M)$. The homeomorphism of $M$ induced by the crushing map from $\mathcal T$ to $\bar{\mathcal T}$ sends $A,B$ to the two branches $\beta_1,\beta_2$ of the frame $\xi$. Thus the curve $\partial S = nA + mB$ is taken to $\partial \bar S = n\beta_1 + m\beta_2$.
	
\end{remark}

	\section{The Algorithm}\label{Sec:Algo}
	
		We now give an algorithm to construct, for the exterior of a knot in a rational homology 3-sphere $M_k$ that fibers over the circle with fiber $F$, an ideal triangulation for which the interior of $F$ is isotopic to a spun-normal surface. We then give a generalization for general circle bundles with fibers homeomorphic to an orientable, once-punctured surface.

Both algorithms make use of Lemma \ref{shorty}. It should be noted in general it may be possible to strategically apply Proposition \ref{shortin} to arrive at an inflation triangulation (not short) where the ideal triangulation obtained by crushing $\partial M$ has the desired property. The current state of the art to verify that the fiber is realized as a spun-normal surface is through the use of the computer software \verb*|tnorm| \cite{Tnorm} developed by Worden, this software must compute the vertex surfaces for each ideal triangulation. There exist frames for which a desired ideal triangulation is not realized at any intermediate step. One such example appears in Section \ref{Sec:Examples}, the three tetrahedra ideal triangulation of the pretzel knot $P(-2,3,7)$ complement in $S^3$.

\begin{theorem}\label{main1}
	Let $M_K$ be the exterior of a knot in a rational homology 3-sphere, and suppose $M_K$ fibers over the circle with fiber $F$. There is an algorithm to construct an ideal triangulation $\mathcal T^*$ of $M_k$ with $S$ a spun-normal surface such that $\mathring S$ is isotopic to $\mathring F$ in $\mathring M_K$. Further, the algorithm will identify $S$.
\end{theorem}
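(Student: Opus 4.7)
The plan is to cascade the machinery of the preceding sections into an algorithmic pipeline: construct a short inflation of $M_K$, locate a vertex-solution representative of the fiber inside it, modify the inflation so that this representative becomes compatible, and then crush along $\partial M_K$ to recover an ideal triangulation containing the fiber as a spun-normal surface. Concretely, I would first apply Lemma \ref{shorty} to any ideal triangulation of $M_K$ to obtain a short inflation $\mathcal T_0$. Next I would locate a vertex-solution representative of the fiber by invoking Lemma \ref{vertexfiber}: algorithmically, enumerate the finitely many vertex solutions of $\mathcal P_{\mathcal T_0}$, test each for incompressibility and $\partial$-incompressibility, compute $\chi_-$ and the Thurston norm to isolate the lw-taut ones, and match the relative homology class against $[F]=[K]\in H_2(M_K,\partial M_K)\cong H_1(K)$. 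Lemma \ref{fibiso} then certifies that any matching vertex solution $F_0$ is isotopic to $F$.

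Since $M_K$ is a knot exterior in a rational homology $3$-sphere and $F$ is a once-punctured surface, $F_0$ is a connected, properly embedded, incompressible, $\partial$-incompressible surface with $\partial F_0$ connected, so the hypotheses of Theorem \ref{shortquads} are met. Feeding $F_0$ into the algorithm of that theorem (at most two site-swaps followed by shortening moves, with the effect on normal discs governed by Figure \ref{fig:bmoves} and Remark \ref{23normal}) produces a short inflation $\mathcal T$ of $M_K$ together with a compatible normal surface, still isotopic to $F$, which I again denote $F_0$. Applying Theorem \ref{crushsurface} then crushes $\mathcal T$ along $\partial M_K$ to yield an ideal triangulation $\mathcal T^*$ and a spun-normal surface $S^*$ whose quadrilateral vector is precisely the restriction of the quadrilateral vector of $F_0$ to tetrahedra lifting into $\widetilde{\mathbf\Delta}^*$; by Kang's theorem $S^*$ is uniquely determined by this vector, so the procedure simultaneously outputs $\mathcal T^*$ and names $S^*$. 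The final clause of Theorem \ref{crushsurface} provides that $\mathring S^*$ is isotopic to $\mathring F$ in $\mathring M_K$.

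The principal obstacle is the effective realization of Lemma \ref{vertexfiber}: the lemma as stated only asserts existence of an lw-taut vertex solution in the fiber class, so turning it into a genuine algorithm requires combining vertex-solution enumeration with algorithmic tests for incompressibility and $\partial$-incompressibility, Thurston-norm computation to certify lw-tautness, and a homology check against $[F]$, which is extractable from the given mapping-torus presentation of $M_K$. Once a vertex solution matching $[F]$ is in hand, the subsequent bookkeeping through the Pachner moves of Theorem \ref{shortquads} and through the crushing map of Theorem \ref{crushsurface} is mechanical, so the remainder of the algorithm is simply a direct composition of results already established in the paper.
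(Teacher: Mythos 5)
Your proposal is correct and follows essentially the same pipeline as the paper: construct a short inflation via Lemma \ref{shorty}, locate a vertex-solution representative of the fiber via Lemma \ref{vertexfiber}, make it compatible via Theorem \ref{shortquads}, and crush via Theorem \ref{crushsurface}. Your added elaboration on how to effectively search the vertex solutions is reasonable, though the Thurston-norm/lw-taut filtering step is redundant: once a vertex solution is certified connected, two-sided, incompressible, and $\partial$-incompressible with $[F_0]=[F]$, Lemma \ref{fibiso} already gives the isotopy, without needing to verify $\chi_-$-minimality separately.
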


\begin{proof}
	Let $M_K,F$ be given. There is an algorithm given in \cite{Jaco2003} to construct an ideal triangulation $\mathcal T^*$ of $M_K$. Let $\xi$ be a frame of $\mathcal T^*$ and $\mathcal T_{\xi}$ the inflation of $\mathcal T^*$ along $\xi$ as per \cite{Jaco14}. By Lemma \ref{shorty}, we can construct a short inflation triangulation $\mathcal T'$ of $M$. By Lemma \ref{vertexfiber}, there exists a vertex solution, $S'$, in $\mathcal T'$ isotopic to $F$. Using Theorem \ref{shortquads}, modify $\mathcal T'$ to a short inflation $\mathcal T''=(\widetilde{\mathbf{\Delta}}'',\mathbf \Phi'')$ such that the normal surface $S''$ corresponding to $S'$ after the sequence of 2-3 moves has no normal quadrilaterals in the band tetrahedra $b''_1,b''_2$ incident to the material boundary of $\mathcal T''$. Let $\mathcal T^{**}$ be the ideal triangulation obtained by crushing $\mathcal T''$ along $\partial M$. By Theorem \ref{crushsurface}, there is a spun-normal surface, $S^*$, representing a properly embedded surface (we shall also refer to as $S^*$) in $M_K$ isotopic to $S''$. Whence $S^*$ is the desired spun-normal surface, the interior of which in $\mathring M$ is isotopic to $\mathring F$. Further, the normal quadrilaterals of $S^*$ are completely determined by the normal quadrilaterals of $S''$ contained in the tetrahedra $\widetilde{\mathbf{\Delta}}^*\subset \widetilde{\mathbf{\Delta}}''$ as denoted in Remark \ref{naming} .
\end{proof}

The key to identifying the quadrilateral coordinates for $S^*$ above lies in the fact that $S'$ is a vertex solution. While the vertex solutions can be enumerated using linear algebra, there are algorithms developed by Burton which are optimized for normal surface theory in \cite{Burton1,Burton2}.

\begin{theorem}\label{main2}
	Let $M$ be a compact, connected, irreducible, $\partial$-irreducible, atoroidal, orientable 3-manifold with non-empty connected boundary homeomorphic to a torus. Suppose $M$ fibers over the circle with fiber $F$. There is an algorithm to construct an ideal triangulation $\mathcal T^*$ of $M$ with $S$ a spun-normal surface such that $\mathring S$ is isotopic to $\mathring F$. Further the algorithm will identify $S$.
\end{theorem}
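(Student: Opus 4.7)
The plan is to mirror the proof of Theorem \ref{main1}, replacing only the single-vertex-solution step (Lemma \ref{vertexfiber}) with the sum-of-vertex-solutions description supplied by Remark \ref{thurstonvert}. Once a normal surface isotopic to the fiber is located in a short inflation, the remaining machinery of Section \ref{Sec:3}---namely Theorem \ref{shortquads} to make the surface compatible and Theorem \ref{crushsurface} to pass to the crushed ideal triangulation---goes through without change.

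Concretely, I would first use \cite{Bing, Jaco2003} to construct an ideal triangulation $\mathcal T^*$ of $M$, inflate it via \cite{Jaco14} to some $\mathcal T_\xi$, and then invoke Lemma \ref{shorty} to obtain a short inflation $\mathcal T'$. Next, Remark \ref{thurstonvert} furnishes a collection of vertex solutions $\{F_i\}_{i\leq \beta_1}$ in $\mathcal T'$ such that the normal surface $S' = \sum F_i$ is isotopic to $F$; these vertex solutions can be enumerated via linear algebra or, more efficiently, through the algorithms of Burton \cite{Burton1, Burton2}, and the particular combination representing $F$ is identified by computing relative homology classes and, where needed, applying Proposition \ref{prop:jacosedgwick} together with Lemma \ref{fibiso} on the lw-taut face carrying $[F]$. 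Because $S'$ is isotopic to the connected, incompressible, $\partial$-incompressible surface $F$ with connected boundary, Theorem \ref{shortquads} applies and yields a short inflation $\mathcal T''$ in which the corresponding normal surface $S''$ is compatible. Finally, Theorem \ref{crushsurface}, applied to $\mathcal T''$ and $S''$, produces the ideal triangulation $\mathcal T^{**}$ together with a spun-normal surface $S^*$ whose normal quadrilaterals are read off from those of $S''$ lying in $\widetilde{\mathbf{\Delta}}^* \subset \widetilde{\mathbf{\Delta}}''$ (in the notation of Remark \ref{naming}), and whose interior is isotopic to $\mathring F$ in $\mathring M$. The algorithm thereby outputs both $\mathcal T^{**}$ and an explicit $Q$-vector for $S^*$.

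The main obstacle, relative to the knot-exterior setting of Theorem \ref{main1}, is locating and certifying a normal-surface representative of $F$ without the homological rigidity provided by a rational homology 3-sphere. In that earlier setting the relative homology class $[F]$ generates $H_2(M_K, \partial M_K)$, so a single lw-taut vertex solution must represent $F$; here $[F] \in H_2(M, \partial M)$ need not generate, and one may require up to $\beta_1$ vertex solutions to recover $F$ up to isotopy. This is exactly what Remark \ref{thurstonvert}---built from Thurston's fibered-face theory \cite{TNB} and Corollary 5.8 of \cite{TollefsonWang}---delivers. Once $S'$ is in hand, the hypotheses of Theorem \ref{shortquads} depend only on the topology of the underlying surface, not on whether it is expressed as a single vertex solution or as a sum, so the remainder of the construction proceeds verbatim as in the proof of Theorem \ref{main1}.
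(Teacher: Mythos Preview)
Your proposal is correct and follows essentially the same route as the paper: build a short inflation via Lemma \ref{shorty}, replace the single-vertex-solution step of Lemma \ref{vertexfiber} by the sum-of-vertex-solutions description in Remark \ref{thurstonvert}, and then feed the resulting normal surface through Theorem \ref{shortquads} and Theorem \ref{crushsurface} exactly as in Theorem \ref{main1}. The paper's proof is slightly terser---it simply forms the finite collection $\mathfrak C$ of all sums of at most $\beta_1$ vertex solutions and asserts that one of them is isotopic to $F$---whereas you add the (reasonable) detail of how to certify the correct member via homology and Lemma \ref{fibiso}.
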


The only significant difference when compared to Theorem \ref{main1} is the inability to locate the fiber amongst the vertex solutions in the short inflation $\mathcal T'$.

\begin{proof}
	Let $M,F$ be given. There is an algorithm given in \cite{Jaco2003} to construct an ideal triangulation $\mathcal T^*$ of $M$. Let $\xi$ be a frame of $\mathcal T^*$ and $\mathcal T_\xi$ the inflation of $\mathcal T^*$ along $\xi$. By Lemma \ref{shorty}, we can construct a short inflation triangulation $\mathcal T'$ of $M$. Enumerate the vertex solutions in $\mathcal T'$. To the collection of vertex solutions $\{F_i\}_{i\in I}$, include all sums of the form $\sum_{i\in J} F_i$ where $J\subset I$ of carnality less than $\beta_1$ the first betti number of $M$. Call this collection $\mathfrak C$. By Remark \ref{thurstonvert}, there is a normal surface $S' \in \mathfrak C$ that is isotopic to $F$. The remainder of the proof follows that of Theorem \ref{main1}.
\end{proof}

In practice, to construct the initial ideal triangulation $\mathcal T^*$ we use SnapPy \cite{SnapPy} which uses a heuristic to construct a triangulation with as few tetrahedra as possible by searching the Pachner graph. When the cusped manifold is hyperbolic, SnapPy will return a geometric ideal triangulation which is always 0-efficient and $\partial$-efficient \cite{Bryant, kang2005ideal}.
	
	\section{Examples and Questions}\label{Sec:Examples}
	
		The following section contains two examples of the algorithm of Theorem \ref{main1}. A third example is given to illustrate the general absence of fibers among spun-normal surfaces. In each example, Regina \cite{regina} and SnapPy are utilized via SageMath \cite{sagemath} to construct and manipulate triangulations. Worden's program \verb*|tnorm| \cite{Tnorm} is used to verify that fibers are represented by embedded spun-normal surfaces. 

\begin{example}\label{ex:5_1}
	
	We begin with this section by applying Theorem \ref{main1} to build an ideal triangulation of the complement of the trefoil, $M_K$ which fibers over the circle with incompressible fiber a once punctured torus.

	The process begins with the ideal triangulation given by the isomorphism signature \cite{BurtonIsoSig} `cPcbbbadu' and the following gluings after orientations using the notation of Regina:
	
	\begin{table}[!h]
		\center
		\begin{tabular}{c@{\hskip 0.6in}c@{\hskip 0.6in}c@{\hskip 0.6in}c@{\hskip 0.6in}c}
			\text{tet} & (012)      & (013)      & (023)      & (123)\\
			(0)         & (1)(210) & (1)(031)     & (1)(032)     & (1)(132)\\
			(1)         & (0)(210) &  (0)(031)    &  (0)(032)    & (0)(132)
		\end{tabular}
		\caption{\label{tab:41} Ideal triangulation $\mathcal T^*$ of $M_K=S^3\backslash\{\text{trefoil knot}\}$.}
	\end{table}
	
	We examine the vertex-linking surface and choose a frame, $\xi$, for the inflation as in Figure \ref{fig:trefoillink}. We then inflate along $\xi$ yielding the following inflation triangulation:
	
	\begin{figure}
		\includegraphics[width=3in]{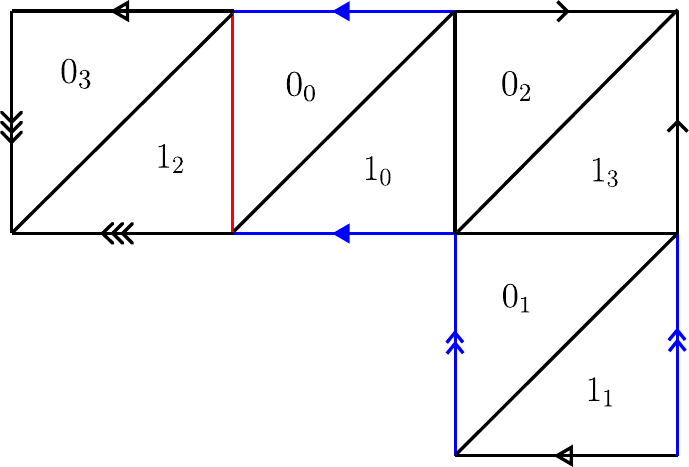}\caption{The vertex-linking surface of `cPcbbbadu,' a two tetrahedra ideal triangulation of the trefoil. Arrows denote edge identification. The labels $i_j$ correspond to the normal triangle in tetrahedron $i$ separating vertex $j$. The frame, $\xi$, consists of the red and blue edges.}\label{fig:trefoillink}
	\end{figure}
	
	\begin{table}[!h]
		\center
		\begin{tabular}{c@{\hskip 0.6in}c@{\hskip 0.6in}c@{\hskip 0.6in}c@{\hskip 0.6in}c}
			\text{tet} & (012)      & (013)      & (023)      & (123)\\
			(0)         & ($b_0$)(123) & (1)(031)     & ($b_2$)(032)     & (1)(132)\\
			(1)         & ($b_1$)(203) &  (0)(031)    &  ($b_2$)(123)    & (0)(132)\\
			($b_0$)		& ($p_1$)(123) & ($p_0$)(023) & ($b_1$)(312)	& (0)(012) \\
			($b_1$)		& ($c$)(013)	& ($c$)(321)  & (1)(102)		& ($b_0$)(230)\\
			($b_2$)		& ($c$)(320)	& ($p_0$)(213)& (0)(032)		& (1)(023)\\
			($c$)		& ($p_1$)(013)	& ($b_1$)(012)& ($b_2$)(210)	& ($b_1$)(310)\\
			($p_0$)		& 				& ($p_1$)(023)& ($b_0$)(013)	& ($b_2$)(103)\\
			($p_1$)		& 				& ($c$)(012)  & ($p_0$)(013)	& ($b_0$)(012)
		\end{tabular}
		\caption{\label{tab:42}Inflation $\mathcal T_\xi$ of $\mathcal T^*$.}
	\end{table}
	
	
	We give this triangulation to Regina and perform 2-3 moves through faces 11, 10, 15, 15 of the respective retriangulations to get the short inflation with the gluings shown in Table \ref{tab:43}.
	
	\begin{table}[!h]
		\center
		\begin{tabular}{c@{\hskip 0.6in}c@{\hskip 0.6in}c@{\hskip 0.6in}c@{\hskip 0.6in}c}
			\text{tet} & (012)      & (013)      & (023)      & (123)\\
			(0)         & 2 (123) & 1 (031)     & 8 (203)     & 1 (132)\\
			(1)         & 5 (013) &  0 (031)    &  11 (032)    & 0 (132)\\
			(2)		& 4 (123) 		& 3 (023) & 6 (320)	& 0 (012) \\
			(3)		& 				& 4 (023)  & 2 (013)		& 10 (132)\\
			(4)		& 				& 10 (310)& 3 (013)		& 2 (012)\\
			(5)		& 8 (231)		& 1 (012)& 6 (013)	& 7 (201)\\
			(6)		& 	7 (231)		& 5 (023)& 2 (320)	& 9 (201)\\
			(7)		& 	5 (231)		& 9 (231)  & 8 (021)	& 6 (201) \\
			(8)		& 7 (032)		& 11 (213)		& 0 (203)	& 5 (201)\\
			(9)		& 6 (231)		& 10 (012) 	& 11 (021)	& 7 (103)\\
			(10)	& 9 (013)		& 4 (310)		& 11 (013)	& 3 (132)\\
			(11)	& 9 (032)		&10 (023)		& 1 (032)	&  8 (103)
		\end{tabular}
		\caption{\label{tab:43} Short inflation $\mathcal T'$.}
	\end{table}
	
	
	Using Regina we now enumerate the vertex solutions. Among these surfaces there is a single essential once-punctured torus. This is surface 23 in the enumeration given by Regina version 7.3. By Lemma \ref{vertexfiber} this surface is isotopic to a fiber of the bundle structure on $M_K$. The band and branch tetrahedra are shown in Figure \ref{initialinflationTref}. After a 2-2 move at the branch-point tetrahedra $\Delta_3,\Delta_4$, we arrive at Case 3 of Theorem \ref{shortquads}. We then perform a 2-3 move at face (023) of $\Delta_2$ and subsequently shorten to arrive at the final short inflation in Table \ref{tab:43}.
	
	\begin{table}[!h]
		\center
		\begin{tabular}{c@{\hskip 0.6in}c@{\hskip 0.6in}c@{\hskip 0.6in}c@{\hskip 0.6in}c}
			\text{tet} & (012)      & (013)      & (023)      & (123)\\
			(0)         & 11 (123) & 1 (031)     & 6 (203)     & 1 (132)\\
			(1)         & 3 (013) &  0 (031)    &  8 (032)    & 0 (132)\\
			(2)		&  		& 9 (213) & 10 (013)	& 12 (310) \\
			(3)		& 	6 (231)    & 1 (012)  & 4 (013)		& 5 (201)\\
			(4)		& 	5 (231)			& 3 (023)& 10 (231)		& 7 (201)\\
			(5)		& 3 (231)		& 7 (213)  & 6 (021)	& 4 (201)\\
			(6)		& 	5 (032)		& 8 (213)  & 0 (203)	  & 3 (201)\\
			(7)		& 	5 (231)		& 9 (231)  & 8 (021)	& 6 (201) \\
			(8)		& 7 (032)		& 13 (023)	& 1 (032)	& 6 (103)\\
			(9)		& 				& 10 (012) 	& 12 (312)	& 2 (103)\\
			(10)	& 9 (013)		& 2 (023)		& 13 (312)	& 4 (302)\\
			(11)	& 7 (013)		&12 (012)		& 13 (021)	&  0 (012)\\
			(12)	& 11 (013)		& 2 (231) 	& 12 (013)	& 9 (230)\\
			(13)	& 11 (032)		& 12 (023)	& 8 (013)	& 10 (230)
		\end{tabular}
		\caption{\label{tab:44} Adjusted short inflation $\mathcal T''$ after site-swapping.}
	\end{table}
	
	\begin{figure}
		\includegraphics[width=2.5in]{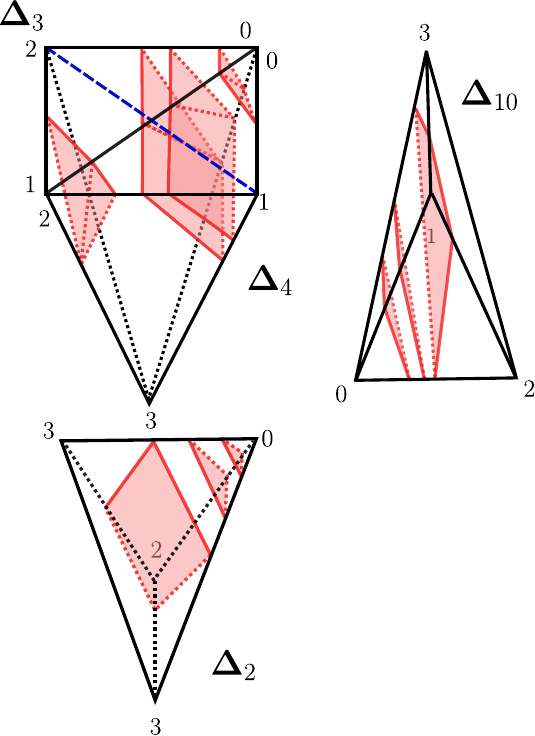}
		\caption{\label{initialinflationTref}The band and branch tetrahedra of $\mathcal T'$. Faces 3 (012) and 4 (012) constitute $\partial M_K$. The edge after a 2-2 move is shown in blue.}
	\end{figure}
	
	We again enumerate the vertex normal surfaces, finding the fiber $F$ at index 43 having no quadrilaterals in the band tetrahedra. We can now create the desired ideal triangulation of $M_K$ by crushing along the normal boundary in $\mathcal T''$. During the crushing processes as implemented by Regina, if $\Delta_i,\Delta_j\in\mathcal T''$ with $i<j$, then the corresponding ideal tetrahedra in the induced ideal triangulation $\Delta^*_l,\Delta^*_k\in\mathcal T^{**}$ are such that $l<k$. Thus we can take the quadrilateral coordinates of $F$ in $\mathcal T''$ and remove the corresponding three coordinates for each band and branch tetrahedron. We then have $F$ given as a spun-normal surface in $\mathcal T^{**}$ with Q-coordinates: $$v^*_Q=(0,1,0,0,0,1,0,0,0,0,1,0,0,1,0,0,2,0,0,1,0,0,2,0,0,0,0,1,0,0)$$. This tuple can be constructed without crushing $\mathcal T''$. Rather, take the quadrilateral coordinates of $F$ in $\mathcal T''$ and remove those twelve coordinates which correspond to normal quadrilateral types in band or branch-point tetrahedra. In Table \ref{tab:44}, these are tetrahedra numbered 2, 9, 10, and 12.
	
	
\end{example}

\begin{example}\label{ex:5_2}	We now apply Theorem \ref{main1} to the (-2,3,7)-pretzel knot $K$, K12n242 in the Hoste-Thistlewaite-Weeks enumeration \cite{Knottable}. This gives the first known example of an ideal triangulation of the knot complement $M_K$ in which a fiber of the bundle structure, $S_{5,1}$ the once punctured genus 5 surface over $S^1$ is realized as a spun-normal surface. As $M_K$ is hyperbolic, we can use \verb*|tnorm| to quickly tell if a fiber is among the spun-normal surfaces. 
	
	\begin{lstlisting}[language=Python]
		sage: import regina; import snappy; import tnorm
		sage: M=snappy.Manifold("K12n242"); T=regina.Triangulation3(M._to_string())
		sage: W1 = tnorm.load(M); B=W1.norm_ball
		sage: B.vertices()
		[Vertex 0: represented by (1/9)*S_5,1 at (-1/9,), not represented by any qtons,
		Vertex 1: represented by (1/9)*S_5,1 at (1/9,), not represented by any qtons]\end{lstlisting}As one comes to expect, the fiber is not present among the spun-normal surfaces. Table \ref{tab:45} gives the glueings for the triangulation of $M_K$, which we shall call $\mathcal T^*$. Figure \ref{fig:237vert} shows the induced triangulation of the ideal boundary and highlights the frame $\xi$.
	
	\begin{table}[!h]
		\center
		\begin{tabular}{c@{\hskip 0.6in}c@{\hskip 0.6in}c@{\hskip 0.6in}c@{\hskip 0.6in}c}
			\text{tet} & (012)      & (013)      & (023)      & (123)\\
			(0)         & 1 (032) & 1 (201)     & 2 (032)     & 1 (132)\\
			(1)         & 0 (130) &  2 (310)    &  0 (021)    & 0 (132)\\
			(2)			& 2 (231) & 1 (310)		& 0 (032)	& 2 (201)
		\end{tabular}
		\caption{\label{tab:45} Ideal triangulation $\mathcal T^*$ of $M_K$, the complement of K12n242, as generated by SnapPy.}
	\end{table}
	
	\begin{figure}
		\includegraphics[width=3in]{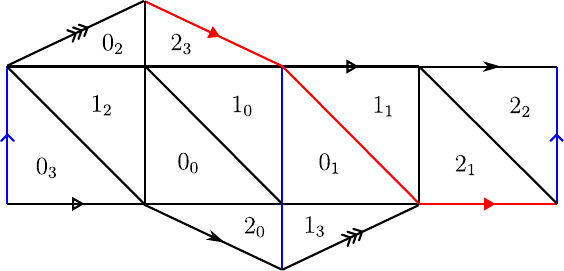}
		\caption{\label{fig:237vert}The vertex-linking surface of $\mathcal T^*$. The lables $i_j$ correspond to the normal triangle in tetrahedron $i$ separating vertex $j$. The frame, $\xi$, consists of the red and blue edges.}
	\end{figure}We now inflate $\mathcal T^*$ along $\xi$, the face identifications are given in Table \ref{tab:46}.
	
	\begin{table}[!h]
		\center
		\begin{tabular}{c@{\hskip 0.6in}c@{\hskip 0.6in}c@{\hskip 0.6in}c@{\hskip 0.6in}c}
			\text{tet} & (012)      & (013)      & (023)      & (123)\\
			(0)         & 1 (032) & $b_4$ (213)     & $b_2$ (231)     & $b_0$ (132)\\
			(1)         & $b_4$ (032) &  $b_3$ (321)    &  0 (021)    & $b_0$ (023)\\
			(2)			& $b_1$ (203) & $b_3$ (023)		& $b_2$ (203)	& $b_1$ (321) \\
			($b_0$)		& $p_1$ (123) & $c$ (012)		& 1 (123) 		& 0 (132)\\
			($b_1$)		& $c$ (013)	  & $p_0$ (023)		& 2 (102)		& 2 (321)\\
			($b_2$)		& $p_1$ (013) & $c$ (321)		& 2 (203)		& 0 (302)\\
			($b_3$)		& $c$ (320)	  & $b_4$ (012)		& 2 (013)		& 1 (310)\\
			($b_4$)		& $b_3$ (013) & $p_0$ (213)		& 1 (021)		& 0 (103)\\
			($c$)		& $b_0$ (013) & $b_1$ (012)		& $b_3$ (210)	& $b_2$ (310)\\
			($p_0$)		&			  & $p_1$ (023)		& $b_1$ (013)	& $b_4$ (103)\\
			($p_1$)		& 			  & $b_2$ (012)		& $p_0$ (013)	& $b_0$ (012)
		\end{tabular}
		\caption{\label{tab:46} Inflation $\mathcal T_\xi$ of $\mathcal T^*$ the ideal triangulation of $M_K$.}
	\end{table}
	
	Returning to SageMath, we store the triangulation $\mathcal T_\xi$ as a Regina triangulation object in the variable Tx. We then perform 2-3 moves on faces 12, 11, 12, 12, 19, 26 on respective successive triangulations. Thereafter, site-swaps and shortening moves are performed at faces 14 then 17 and again at 14 and 32. This procedure follows Case 5 of Theorem \ref{shortquads}.
	
	\begin{lstlisting}[language=Python]
		sage: shorten_path = [(2,12),(2,11),(2,12),(2,12),(2,19),(2,26),(2,14),(2,17),(2,14),(2,32)]
		sage: for dim,index in shorten_path:
		....:      Tx.pachner(Tx.face(dim,index))
		sage: Tx.size()
		21
		sage: Tx.isZeroEfficient()
		True
		sage: Normal_Surface_List = regina.NormalSurfaces.enumerate(Tx, regina.NormalCoords.NS_QUAD, regina.NormalListFlages.NS_Vertex)
		sage: for i in range(Normal_Surface_List.size()):
		....:      surface = Normal_Surface_List.surface(i)
		....:      if surface.eulerChar == 0 and not surface.hasRealBoundary():
		....:	   			ideal_tri = surface.crush()
		....:					return True
		sage: ideal_tri.removeSimplex(18); Tc.removeSimplex(17) %disconnected
		sage: M = snappy.Manifold(ideal_tri.snapPea())
		sage: W = tnorm.load(M); B = W.norm_ball
		Enumerating quad transversely oriented normal surface (qtons)... Press Ctrl+C to abort.
		progress: 100.0% ...Done.
		computing simplicial homology...Done.
		computing Thurston norm unit ball... Done.
		sage: B.vertices()
		[Vertex 0: represented by (1/9)*S_5,1 at (-1/9,), mapped from embedded qtons with index 1131,
		Vertex 1: represented by (1/9)*S_5,1 at (1/9,), mapped from embedded qtons with index 219]\end{lstlisting}
	
	Using Regina, crush Tx along the normal boundary (after finding said normal boundary via Regina), \verb*|tnorm| then identifies the vertices of Thurston's norm ball represented by embedded spun-normal surfaces in Regina's enumeration at indices 1131 and 219.
	
\end{example}

\begin{example}
	In \cite{KangNote}, Kang and Rubinstein show that for a layered triangulation of a once-punctured surface bundle, there can be no spun-normal surface isotopic to the fiber. The initial ideal triangulations in both Example \ref{ex:5_1} and Example \ref{ex:5_2} are layered triangulations \cite{agolideal, Jaco1}. We now show that the absence of spun-normal fibers is not particular to layered triangulations. In a blog post \cite{Dunfieldwordpress} Dunfield and Schliemer identified a triangulation of the cusped manifold named `v1348' in the SnapPy cusped manifold census \cite{SnapPy}. This cusped manifold is the complement of a 17 crossing knot in $S^3$ and is fibered by a once-punctured genus 5 surface. The ideal triangulation $\mathcal T^*$ generated by SnapPy for this complement is the canonical triangulation given by the Epstein-Penner decomposition \cite{EpsteinPenner}, and is notable as being the first known example of a canonical triangulation that is not a layered triangulation. Using \verb*|tnorm|, it is possible to verify that the fiber of this bundle structure is not isotopic to a spun-normal surface in $\mathcal T^*$. Via an extension of Remark \ref{rem:curve}, it was also checked that of the spun-normal surfaces with at most one almost-normal octagon type (see \cite{RubinsteinAlmostNormal,thompsonthin}) in $\mathcal T^*$, none are isotopic to the fiber.
\end{example}

\subsection{Questions}
The ideal triangulation created in Example 2 has relatively high complexity (number of ideal tetrahedra) when compared to the minimal triangulation of the complement of K12n242. There are likely lower complexity ideal triangulations in which the fiber spun-normalizes. Examining the available Pachner moves that lower complexity we find a single 3-2 move that essentially reverses the last shortening 2-3 move in \verb*|path|. The effect in terms of the spun-normal surfaces is to eliminate the possibility that the fiber spun-normalizes. Searching the Pachner graph while searching for a triangulation with a particular property is in general hard. For example, Regina's strategy for finding a triangulation with fewer tetrahedra is to randomly apply 1-4 moves and check if 3-2 moves appear.

\begin{question}
	For a fibered knot complement $M_K$, can a path in the Pachner graph be found ``efficiently" that transforms one ideal triangulation of $M_K$ into another in which a fiber is realized as an embedded spun-normal surface?
\end{question}

Though it does not occur in both Example \ref{ex:5_1} and \ref{ex:5_2}, it is possible that at an intermediate step of the shortening process crushing the boundary of $M$ yields an ideal triangulation in which the fiber is isotopic to a spun-normal surface. This leads to the following:

\begin{question}
	What criteria can be placed on a surface in an inflation triangulation that ensures an isotopic spun-normal surface exists in the ideal triangulation obtained by crushing $\partial M$?
\end{question}

For a given ideal triangulation $\mathcal T^*$ of complexity $n$, we have an upper bound on the required complexity $C$ of an ideal triangulation in which a fiber is realized as an embedded spun-normal surface, given by $C\leq n+2\text{len}(\xi)+4$. Here $\xi$ can be taken of minimal length among the frames of $\mathcal T^*$. This bound is specific to $\mathcal T^*$. Further, it is possible for $C_1>C_2$ with $n_1<n_2$. The addition of the ``4" in the upper bound is derived from the two possible site-swaps needed after the short inflation is constructed.

 In \cite{CoopTW}, an ideal triangulation with $|\mathcal T^*|=5$ is constructed for the figure-8 complement in which the fiber $S_{1,1}$ of the surface bundle structure is realized as an embedded spun-normal surface. This ideal triangulation does not come from crushing a short inflation along the boundary linking surface. That is, there is no frame of length 2 for this ideal triangulation. Further, this ideal triangulation admits a taut angle structure \cite{Lackenby}.

\begin{question}
For the complement $M_K$ of a hyperbolic knot $K$ that has the structure of a fiber bundle over a once-punctured surface $F$, does there exist an ideal triangulation admitting a strict (taut, semi) angle structure in which $F$ is isotopic to a spun-normal surface?
\end{question}

The technique used to find a short inflation in which the fiber is compatible effectively moves the incompatible normal quadrilaterals into a single branch. Corollary \ref{cor:onequad} then ensures that after shortening the fiber will be compatible. If an immersed surface meets a band tetrahedra in both incompatible normal quadrilateral types, this technique fails (see Figure \ref{fig:bmoves}). Hence the results in this work do not generally extend to virtual fibers. 

\begin{question}
	Let $M$ be a once-cusped, hyperbolic 3-manifold. Let $\widetilde{M}$ be a finite cover of $M$ that fibers over $S^1$ with fiber $F$ (not necessarily once-punctured). Does there exist an ideal triangulation of $M$ such that the image of $F$ under the covering map is regular homotopic to an immersed spun-normal surface? 
\end{question}

	\bibliographystyle{IEEEannot}
	\bibliography{bibli}

\begin{thebibliography}{10}
\providecommand{\url}[1]{#1}
\csname url@rmstyle\endcsname
\providecommand{\newblock}{\relax}
\providecommand{\bibinfo}[2]{#2}
\providecommand\BIBentrySTDinterwordspacing{\spaceskip=0pt\relax}
\providecommand\BIBentryALTinterwordstretchfactor{4}
\providecommand\BIBentryALTinterwordspacing{\spaceskip=\fontdimen2\font plus
\BIBentryALTinterwordstretchfactor\fontdimen3\font minus
  \fontdimen4\font\relax}
\providecommand\BIBforeignlanguage[2]{{%
\expandafter\ifx\csname l@#1\endcsname\relax
\typeout{** WARNING: IEEEtran.bst: No hyphenation pattern has been}%
\typeout{** loaded for the language `#1'. Using the pattern for}%
\typeout{** the default language instead.}%
\else
\language=\csname l@#1\endcsname
\fi
#2}}

\bibitem{agolideal}
I.~Agol, ``Ideal triangulations of pseudo-anosov mapping tori,'' \emph{Topology
  and geometry in dimension three}, vol. 560, pp. 1--17, 2011.


\bibitem{agolvirtual}
------, ``The virtual haken conjecture (with an appendix by ian agol, daniel
  groves and jason manning).'' \emph{Documenta Mathematica}, vol.~18, pp.
  1045--1087, 2013.


\bibitem{armentrout}
S.~Armentrout, \emph{Cellular decompositions of 3-manifolds that yield
  3-manifolds}.\hskip 1em plus 0.5em minus 0.4em\relax American Mathematical
  Soc., 1971, no. 107.


\bibitem{BergeronWise}
N.~Bergeron and D.~T. Wise, ``A boundary criterion for cubulation,''
  \emph{American Journal of Mathematics}, vol. 134, no.~3, pp. 843--859, 2012.


\bibitem{Bing}
R.~H. Bing, ``An alternative proof that 3-manifolds can be triangulated,''
  \emph{Annals of Mathematics}, vol.~69, p.~37, 1959.


\bibitem{Bryant}
B.~Bryant, W.~Jaco, and J.~H. Rubinstein, ``Efficient triangulations and
  boundary slopes,'' \emph{Topology and its Applications}, vol. 297, p. 107689,
  2021.


\bibitem{Burton1}
B.~Burton, ``Converting between quadrilateral and standard solution sets in
  normal surface theory,'' \emph{Algebraic {\&} Geometric Topology}, vol.~9, 01
  2009.


\bibitem{Burton2}
B.~Burton and M.~Ozlen, ``A tree traversal algorithm for decision problems in
  knot theory and 3-manifold topology,'' \emph{Algorithmica}, vol.~65, 10 2010.


\bibitem{BurtonIsoSig}
\BIBentryALTinterwordspacing
B.~A. Burton, ``The pachner graph and the simplification of 3-sphere
  triangulations,'' in \emph{Proceedings of the Twenty-Seventh Annual Symposium
  on Computational Geometry}, ser. SoCG '11.\hskip 1em plus 0.5em minus
  0.4em\relax New York, NY, USA: Association for Computing Machinery, 2011, p.
  153–162. [Online]. Available: \url{https://doi.org/10.1145/1998196.1998220}
\BIBentrySTDinterwordspacing


\bibitem{regina}
B.~A. Burton, R.~Budney, W.~Pettersson, \emph{et~al.}, ``Regina: Software for
  low-dimensional topology,'' {\tt http://\allowbreak regina-normal.\allowbreak
  github.\allowbreak io/}, 1999--2023, {version 7.3}.


\bibitem{CoopTW}
D.~Cooper, S.~Tillmann, and W.~Worden, ``The thurston norm via spun-normal
  immersions,'' \emph{Transactions of the American Mathematical Society, Series
  B}, vol.~12, no.~06, pp. 191--236, 2025.


\bibitem{SnapPy}
M.~Culler, N.~M. Dunfield, M.~Goerner, and J.~R. Weeks, ``Snap{P}y, a computer
  program for studying the geometry and topology of $3$-manifolds,'' Available
  at \url{http://snappy.computop.org} (11/11/2024), {version 3.1.1}.


\bibitem{Dunfieldwordpress}
N.~Dunfield, ``Canonical triangulations of surface bundles,'' Available online
  at
  \url{https://dunfield.wordpress.com/2009/08/20/canonical-triangulations-of-surface-bundles/},
  2009.


\bibitem{Edmonds}
A.~L. Edmonds and C.~Livingston, ``Group actions on fibered three-manifolds,''
  \emph{Commentarii Mathematici Helvetici}, vol.~58, pp. 529--542, 1983.


\bibitem{EpsteinPenner}
D.~B. Epstein and R.~C. Penner, ``Euclidean decompositions of noncompact
  hyperbolic manifolds,'' \emph{Journal of Differential Geometry}, vol.~27,
  no.~1, pp. 67--80, 1988.


\bibitem{Gutsbook}
D.~Futer, E.~Kalfagianni, and J.~Purcell, ``Guts of surfaces and the colored
  jones polynomial,'' \emph{Lecture Notes in Mathematics}, vol. 2069, 08 2011.


\bibitem{3dindex}
\BIBentryALTinterwordspacing
S.~Garoufalidis, C.~D. Hodgson, N.~R. Hoffman, and J.~H. Rubinstein, ``{The
  3D-index and normal surfaces},'' \emph{Illinois Journal of Mathematics},
  vol.~60, no.~1, pp. 289 -- 352, 2016. [Online]. Available:
  \url{https://doi.org/10.1215/ijm/1498032034}
\BIBentrySTDinterwordspacing


\bibitem{Haken}
W.~Haken, ``Theorie der normalfl{\"a}chen,'' \emph{Acta Mathematica}, vol. 105,
  pp. 245--375, 1961.


\bibitem{Knottable}
J.~Hoste, M.~Thistlethwaite, and J.~Weeks, ``The first 1,701,936 knots,''
  \emph{Math. Intelligencer}, vol.~20, no.~4, pp. 33--48, 1998.


\bibitem{JacoOertel}
W.~Jaco and U.~Oertel, ``An algorithm to decide if a 3-manifold is a haken
  manifold,'' \emph{Topology}, vol.~23, no.~2, pp. 195--209, 1984.


\bibitem{Jaco2003}
W.~Jaco and H.~Rubenstein, ``0-efficient triangulations of 3-manifolds,''
  \emph{Journal of Differential Geometry}, vol.~65, pp. 61--168, 2003.


\bibitem{Jaco1}
------, ``Layered-triangulations of 3-manifolds,'' \emph{arXiv:math/0603601},
  2006.


\bibitem{Jaco14}
------, ``Inflations of ideal triangulations,'' \emph{Advances in Mathematics},
  vol. 267, pp. 176--224, 2014.


\bibitem{Findingplanarsurfaces}
W.~Jaco, H.~Rubinstein, and E.~Sedgwick, ``Finding planar surfaces in knot- and
  link-manifolds,'' \emph{Journal of Knot Theory and Its Ramifications},
  vol.~18, pp. 397--446, 09 2006.


\bibitem{jaco2020minimal}
W.~Jaco, H.~Rubinstein, J.~Spreer, and S.~Tillmann, ``On minimal ideal
  triangulations of cusped hyperbolic 3-manifolds,'' \emph{Journal of
  Topology}, vol.~13, no.~1, pp. 308--342, 2020.


\bibitem{Jaco98}
W.~Jaco and E.~Sedgwick, ``Decision problems in the space of dehn fillings,''
  \emph{Topology}, p. 906, 2003.


\bibitem{jacoalgorithms}
W.~Jaco and J.~L. Tollefson, ``Algorithms for the complete decomposition of a
  closed $3 $-manifold,'' \emph{Illinois journal of mathematics}, vol.~39,
  no.~3, pp. 358--406, 1995.


\bibitem{Kang2005}
E.~Kang, ``Normal surfaces in non-compact 3-manifolds,'' \emph{Journal of the
  Australian Mathematical Society}, vol.~78, pp. 305 -- 321, 2005.


\bibitem{KangNote}
E.~Kang and H.~Rubinstein, ``Note on spun normal surfaces in 1-efficient ideal
  triangulations,'' \emph{Proceedings of the Japan Academy, Series A,
  Mathematical Sciences}, vol.~91, pp. 118--122, 08 2015.


\bibitem{KRSpun}
------, ``Essential normal and spun normal surfaces in 3-manifolds,''
  \emph{Proceedings of the American Mathematical Society}, vol. 146, p.~1, 12
  2017.


\bibitem{kang2005ideal}
E.~Kang and J.~H. Rubinstein, ``Ideal triangulations of 3-manifolds {II}: taut
  and angle structures,'' \emph{Algebraic \& Geometric Topology}, vol.~5,
  no.~4, pp. 1505--1533, 2005.


\bibitem{Kitayama2022}
T.~Kitayama, \emph{A Survey of the Thurston Norm}.\hskip 1em plus 0.5em minus
  0.4em\relax Cham: Springer International Publishing, 2022, pp. 149--199.


\bibitem{Lackenby}
M.~Lackenby, ``Taut ideal triangulations of 3-manifolds,'' in \emph{Geometry
  and Topology}, vol.~4, 2000, pp. 369--395.


\bibitem{Moise}
\BIBentryALTinterwordspacing
E.~E. Moise, ``Affine structures in 3-manifolds: V. the triangulation theorem
  and hauptvermutung,'' \emph{Annals of Mathematics}, vol.~56, no.~1, pp.
  96--114, 1952. [Online]. Available: \url{http://www.jstor.org/stable/1969769}
\BIBentrySTDinterwordspacing


\bibitem{neuwirth}
L.~P. Neuwirth, ``Knot groups,'' \emph{Annals of Mathematics Studies.(AM-56)},
  1965.


\bibitem{Pachner1978}
U.~Pachner, ``Bistellare {\"a}quivalenz kombinatorischer mannigfaltigkeiten,''
  \emph{Archiv der Mathematik}, vol.~30, pp. 89--98, 1978.


\bibitem{RubinsteinAlmostNormal}
J.~H. Rubinstein, ``An algorithm to recognize the 3-sphere,'' in
  \emph{Proceedings of the International Congress of Mathematicians}, S.~D.
  Chatterji, Ed.\hskip 1em plus 0.5em minus 0.4em\relax Basel: Birkh{\"a}user
  Basel, 1995, pp. 601--611.


\bibitem{sieb}
L.~C. Siebenmann, ``Approximating cellular maps by homeomorphisms,''
  \emph{Topology}, vol.~11, no.~3, pp. 271--294, 1972.


\bibitem{sagemath}
{The Sage Developers}, \emph{{S}ageMath, the {S}age {M}athematics {S}oftware
  {S}ystem ({V}ersion 9.5)}, 2022, {\tt https://www.sagemath.org}.


\bibitem{thompsonthin}
A.~Thompson, ``Thin position and the recognition problem for {$S^3$},''
  \emph{Math. Res. Lett}, vol.~1, no.~5, pp. 613--630, 1994.


\bibitem{TNB}
W.~P. Thurston, ``A norm for the homology of 3-manifolds,'' \emph{Memoirs of
  the American Mathematical Society}, vol.~59, pp. 99--130, 1986.


\bibitem{TollefsonQ}
J.~L. Tollefson, ``Normal surface {Q}-theory,'' \emph{Pacific Journal of
  Mathematics}, vol. 183, pp. 359--374, 1998.


\bibitem{TollefsonWang}
J.~L. Tollefson and N.~Wang, ``Taut normal surfaces,'' \emph{Topology},
  vol.~35, pp. 55--75, 1996.


\bibitem{walsh}
G.~S. Walsh, ``Incompressible surfaces and spunnormal form,'' \emph{Geometriae
  Dedicata}, vol. 151, no.~1, pp. 221--231, 2011.


\bibitem{Tnorm}
W.~Worden, ``Tnorm,'' Personal blog at \url{https://pypi.org/project/tnorm}
  Version 1.0.5, 04/04/2023.


\end{thebibliography}
	
\end{document}